\theoremstyle{plain}
\newtheorem{thm}{Theorem}[section]
\newtheorem{cor}[thm]{Corollary}
\newtheorem{rem}[thm]{Remark}
\newtheorem{ques}[thm]{Question}
\newtheorem{conj}[thm]{Conjecture}
\newtheorem{exam}[thm]{Example}
\def\cal{\mathcal}
\def\bbb{\mathbb}
\def\op{\operatorname}
\renewcommand{\phi}{\varphi}
\newcommand{\N}{\bbb{N}}
\newcommand{\Z}{\bbb{Z}}
\newcommand{\Q}{\bbb{Q}}
\begin{document}

\title[On some Diophantine systems]{On some Diophantine systems involving symmetric polynomials}
\author{Maciej Ulas}

\keywords{symmetric polynomials, elliptic curves} \subjclass[2000]{11D25,11G05}

\begin{abstract}
Let $\sigma_{i}(x_{1},\ldots, x_{n})=\sum_{1\leq k_{1}<k_{2}<\ldots <k_{i}\leq n}x_{k_{1}}\ldots x_{k_{i}}$ be the $i$-th elementary symmetric polynomial. In this note we generalize and extend the results obtained in a recent work of Zhang and Cai \cite{ZC,ZC2}. More precisely, we prove that for each $n\geq 4$ and rational numbers $a, b$ with $ab\neq 0$, the system of diophantine equations
\begin{equation*}
  \sigma_{1}(x_{1},\ldots, x_{n})=a, \quad \sigma_{n}(x_{1},\ldots, x_{n})=b,
\end{equation*}
has infinitely many solutions depending on $n-3$ free parameters. A similar result is proved for the system
\begin{equation*}
  \sigma_{i}(x_{1},\ldots, x_{n})=a, \quad \sigma_{n}(x_{1},\ldots, x_{n})=b,
\end{equation*}
with $n\geq 4$ and $2\leq i< n$. Here, $a, b$ are rational numbers with $b\neq 0$.

We also give some results concerning the general system of the form
\begin{equation*}
  \sigma_{i}(x_{1},\ldots, x_{n})=a, \quad \sigma_{j}(x_{1},\ldots, x_{n})=b,
\end{equation*}
with suitably chosen rational values of $a, b$ and $i<j<n$. Finally, we present some remarks on the systems involving three different symmetric polynomials.
\end{abstract}

\maketitle

\section{Introduction}\label{Section1}

In 1996 A. Schinzel proved that for every $k$, there exist infinitely
many primitive sets of $k$ triples of positive integers having the same sum and
the same product \cite{Sch}. This result solved the problem D16 from Guy's book \cite{Guy}. Recently, in an interesting note \cite{ZC2} Zhang and Cai have generalized this result to $n$-tuples. To be more precise, they proved that for each $n\geq 4$ and each $k$, there exist infinitely many $k$ $n$-tuples with the same sum and the same product. They obtained this result by proving that the system
\begin{equation}\label{mainsys1}
  \sigma_{1}(x_{1},\ldots, x_{n})=a,  \quad \sigma_{n}(x_{1},\ldots, x_{n})=b,
\end{equation}
with $a=b=2n$ has infinitely many rational solutions. They further generalizes this result in \cite{ZC} and proved that for every $k$, there exist infinitely many primitive sets of $k$ $n$-tuples of positive integers with the same second elementary symmetric function value and the same product. In fact they proved that the Diophantine system
\begin{equation*}
  \sigma_{2}(x_{1},\ldots, x_{n})=a,  \quad \sigma_{n}(x_{1},\ldots, x_{n})=b,
\end{equation*}
has infinitely many solutions in positive rational numbers $x_{i},\;i=1,\ldots,n$ for each $n\geq 3$ and $a=\frac{1}{2}(3n^2-n-2),\;b=2n$. In both cases their solutions satisfy $x_{1}=\ldots=x_{n-3}=1$. We thus see that those solutions are rather special. Their solutions are parameterized by rational points on a certain elliptic curve (depending on the integer $n$) defined over the field $\Q$. We also should note that the result from \cite{ZC2} follows from Schinzel's work. This was noted by Zieve in \cite{Zieve}.

When a Diophantine problem has infinitely many solutions in integers or rational numbers a natural question arises: is it possible to show the existence of rational parametric solutions, i.e. solutions in polynomials or in rational functions? Moreover, one can ask for which values of $a, b$ with $b\neq 0$ the system (\ref{mainsys1}) has solutions in rational numbers (not necessarily positive). The next question which comes to mind is whether the more general Diophantine system of the form
\begin{equation}\label{mainsys2}
  \sigma_{i}(x_{1},\ldots, x_{n})=a, \quad \sigma_{n}(x_{1},\ldots, x_{n})=b,
\end{equation}
has solutions for given $a, b$ and sufficiently large $n$ and $2\leq i<n$. From the geometric point of view the algebraic variety defined by the system (\ref{mainsys2}) as the intersection of varieties of dimension $n-1$ is a variety of dimension $n-2$ and degree $n+i-1$. Therefore, it is very likely that there exist rational curves on this variety. This easy observation, as well as the curiosity, whether the results of Zhang and Cai can be improved were the main motivation to write this paper.

Let us describe the content of the paper in some details. In section \ref{Section2} we prove that for each $a, b$ with $ab\neq 0$ the system (\ref{mainsys1}) has infinitely many rational parametric solutions depending on $n-3$ free parameters. Similar result is proved in section \ref{Section3} for the general system (\ref{mainsys2}) provided $2\leq i<n$ with $n\geq 4$ and $b\neq 0$. In section \ref{Section4} we investigate the general system of the form
\begin{equation}\label{mainsys3}
  \sigma_{i}(x_{1},\ldots, x_{n})=a, \quad \sigma_{j}(x_{1},\ldots, x_{n})=b,
\end{equation}
with $i<j<n$ and suitable chosen values of $a, b$. We also suggest that our methods are strong enough to tackle some quite general Diophantine systems involving not necessarily symmetric polynomials. Finally, in the last section we give some remarks concerning the systems containing three different symmetric polynomials. In particular we prove that for each $n\geq 5$ there exist rational numbers $a, b, c$ such that the system
\begin{equation*}
\sigma_{1}(x_{1},\ldots, x_{n})=a,\quad \sigma_{2}(x_{1},\ldots, x_{n})=b,\quad \sigma_{3}(x_{1},\ldots, x_{n})=c
\end{equation*}
has infinitely many solutions in rational numbers $x_{1},\ldots, x_{n}$.

\begin{rem}
{\rm All computations in this paper were performed with the help of Mathematica program \cite{Wol}.}
\end{rem}

\section{Solutions of the system (\ref{mainsys1})}\label{Section2}

The aim of this section is the construction of a general solution of the system (\ref{mainsys1}). More precisely, we prove the following result:

\begin{thm}\label{thm1}
Let $n\geq 4$ and let $a, b$ be given non-zero rational numbers. Then the system {\rm (\ref{mainsys1})} has infinitely many rational parametric solutions depending on $n-3$ free parameters.
\end{thm}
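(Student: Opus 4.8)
The plan is to satisfy the product equation $\sigma_n=b$ identically and then reduce the sum equation $\sigma_1=a$ to a single quadratic. First I would set $x_n=b/(x_1\cdots x_{n-1})$, which makes $\sigma_n=b$ automatic as long as $x_1\cdots x_{n-1}\neq 0$; the problem then collapses to the single relation $x_1+\cdots+x_{n-1}+b/(x_1\cdots x_{n-1})=a$. I would promote $x_1,\ldots,x_{n-4}$ to genuinely free parameters and write $S=a-\sum_{i=1}^{n-4}x_i$ and $c=b/\prod_{i=1}^{n-4}x_i$, so that only three active unknowns $u=x_{n-3}$, $v=x_{n-2}$, $w=x_{n-1}$ and the one relation $u+v+w+c/(uvw)=S$ remain. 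The reason for keeping three active unknowns rather than two is that with only two the discriminant condition below would have to be a perfect square \emph{identically} in the parameters, which fails; the extra unknown supplies exactly the one missing degree of freedom, so the whole family will depend on $(n-4)+1=n-3$ parameters, as required.

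Next I would eliminate $w$. Solving the quadratic $w^2-(S-u-v)w+c/(uv)=0$, rationality of $w$ is equivalent to
\[
(S-u-v)^2-\tfrac{4c}{uv}=\square .
\]
To handle this I would substitute $v=c/(u\sigma^2)$, turning the left-hand side into $W^2-4\sigma^2$ with $W=S-u-c/(u\sigma^2)$, and then exploit the factorisation $W^2-4\sigma^2=(W-2\sigma)(W+2\sigma)$: imposing $W+2\sigma=\lambda^2(W-2\sigma)$ makes the product a square for every $\lambda$. This gives a relation between $u,\sigma,\lambda$ and reduces the problem to making one further expression of the shape $H^2-4c$ a square, which one parametrises by writing $H=\theta+c/\theta$, so that $H^2-4c=(\theta-c/\theta)^2$.

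The hard part will be closing this chain. Each application of the completing-the-square device $H=\theta+c/\theta$ removes the current square condition only at the cost of producing a new one of the same type $H'^{\,2}-4c=\square$, because the non-square product $c$ reappears; this is precisely the mechanism that forced Zhang and Cai onto a fixed elliptic curve once they rigidly set $x_1=\cdots=x_{n-3}=1$. The decisive point is that here $S$ and $c$ still carry the free parameters $x_1,\ldots,x_{n-4}$ (and in the base case $n=4$ a single parameter must do all the work, which I expect to be the genuinely delicate case). I would spend this freedom to force a rational point on the genus-one curve cut out by the final square condition—for instance by choosing one parameter so that the underlying quartic acquires a rational root, yielding an explicit section $(u,v,w)$ over the field of parameters. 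Once such a section is in hand, specialising the free parameters gives infinitely many rational solutions, and varying the parameter introduced in the step $v=c/(u\sigma^2)$ (or translating by the group law of the fibration) produces the claimed $(n-3)$-parameter family. I expect the remaining verification, that the chosen substitution clears all denominators and keeps $x_1\cdots x_{n-1}\neq 0$, to be routine but tedious.
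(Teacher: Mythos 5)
Your reduction runs parallel to the paper's up to a point: eliminate one variable to satisfy one equation identically, keep $n-4$ of the $x_i$ as genuinely free parameters, reduce to a quadratic in one active unknown, and normalize its discriminant by tying a second active unknown to the first through a fresh parameter (your $v=c/(u\sigma^2)$ plays the role of the paper's $x_3=-4bt^2x_1$, and your $\sigma$ the role of its $t$). Indeed, after clearing denominators your condition $W^2-4\sigma^2=\square$ becomes the quartic curve $T^2=(Su\sigma^2-u^2\sigma^2-c)^2-4\sigma^6u^2$ in $u$ over the field of parameters, whose constant term is $c^2$; it carries the rational points $(0,\pm c)$, exactly as the paper's curve $\mathcal{C}_1$ carries $(0,\pm 1)$. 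At that moment you are one step from the paper's argument.

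The gap is in what you do instead, and it is the entire substance of the theorem. The descent chain $W+2\sigma=\lambda^2(W-2\sigma)$ followed by $H=\theta+c/(\sigma^2\theta)$ does not close: writing $K=S-\theta-c/(\sigma^2\theta)$, solving your final constraint for $\lambda$ forces $\lambda^2=(K+2\sigma)/(K-2\sigma)$, a new square condition; solving it for $\theta$ reproduces verbatim the discriminant condition you were trying to kill; solving it for $\sigma$ is a cubic with no rational root in general. For the base case $n=4$ — which the whole induction rests on — there is no leftover $x_i$ to "spend", since $S=a$ and $c=b$ are the arbitrarily prescribed constants, so the vague instruction to "choose one parameter so that the quartic acquires a rational root" has no content there (and the quartic already has the rational point $u=0$; that is not the obstacle). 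The real obstacle, which your proposal never addresses, is proving that the genus-one curve has \emph{infinitely many} points over the function field: a single rational point or section does not suffice, since it may well be torsion (the paper itself exhibits a point $W'=(0,pqr)$ on one of its curves that is useless for precisely this reason). The paper resolves this by passing to the Jacobian with $(0,1)$ as origin, exhibiting the image $V$ of $(0,-1)$ explicitly, specializing at $t=1$, and invoking Nagell--Lutz via a computation showing a certain remainder equals $9a^8\neq 0$; the multiples $[m]V$ then furnish the infinitely many parametric solutions. Until you replace the terminating-square heuristics by such a non-torsion argument, uniform in the arbitrary nonzero $a,b$, the theorem is not proved.
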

\begin{proof}
First of all we note that it is enough to assume that $a, b$ are integers with $ab\neq 0$. Indeed, let $A, B$ be rational numbers with a common denominator $D$ and let us suppose that $\overline{X}=(x_{1},\ldots,x_{n})$ is a solution of the system (\ref{mainsys1}) with $a=AD, b=BD^{n}$ and $x_{i}\neq 0$ for $i=1,\ldots,n$. We immediately deduce that $\overline{Y}=\frac{\overline{X}}{D}=\Big(\frac{x_{1}}{D},\ldots,\frac{x_{n}}{D}\Big)$ is the solution of the system
$\sigma_{1}(\overline{Y})=A,\;\sigma_{n}(\overline{Y})=B$.

We first prove our result in the case $n=4$ and then deduce the solution of (\ref{mainsys1}) for all $n\geq 5$. We thus work with the system
\begin{equation}\label{i=1:n=4}
x_{1}+x_{2}+x_{3}+x_{4}=a,\quad x_{1}x_{2}x_{3}x_{4}=b.
\end{equation}
Eliminating $x_{4}$ from the first equation in (\ref{i=1:n=4}) we are left with the equation
\begin{equation}\label{special}
x_{1}x_{2}x_{3}(a-x_{1}-x_{2}-x_{3})=b.
\end{equation}
In order to show that this equation has infinitely many solutions for any given $a, b$ with $ab\neq 0$ we put $x_{3}=-4bt^{2}x_{1}$. After this substitution the equation (\ref{special}) takes the form
\begin{equation*}
F_{P}(Q)=b(4t^2P^2Q^2-4 t^2(a+(4bt^2-1)P)P^2Q-1)=0,
\end{equation*}
where in order to shorten the notation we put $x_{1}=P, x_{2}=Q$. It is enough to show that the set of $P\in\Q(t)$ for which the above equation has a solution (with respect to $Q$) in  $\Q(t)$ is infinite. Equivalently the square-free part of the discriminant $\Delta(P)$ of the polynomial $F_{P}$ should be a square in $\Q(t)$. This leads us to the curve
\begin{equation*}
\cal{C}_{1}:\;S^2=t^2(4bt^2-1)^2P^4+2at^2(4bt^2-1)P^3+a^2t^2P^2+1.
\end{equation*}
The discriminant of the quartic polynomial defining the curve $\cal{C}_{1}$ is of the form $16t^6(4bt^2-1)^4(16+(a^4-128b)t^2+256b^2t^4)$ and is non-zero as an element of $\Q(t)$. Thus the curve $\cal{C}_{1}$ is smooth. Let us also note that the $\Q(t)$-rational point $U=(0,1)$ lies on the curve $\cal{C}_{1}$. If we treat $U$ as a point at infinity and use the method described in \cite[p. 77]{Mor}, we conclude that $\cal{C}_{1}$ is birationally equivalent to the elliptic curve $\cal{E}_{1}$ given by the Weierstrass equation
\begin{equation*}
\cal{E}_{1}:\;Y^2=X^3+AX+B,
\end{equation*}
where $A, B\in\Q(t)$ are given by
\begin{align*}
A&=-27 t^2 (12 + (a^4-96 b)t^2 + 192 b^2 t^4),\\
B&=54 a^2 t^4 (18 + (a^4-144 b) t^2 + 288 b^2 t^4).
\end{align*}
We do not present the explicit equations for the coordinates of the map $\phi:\;\cal{C}_{1}\rightarrow\cal{E}_{1}$ because they are quite complicated. In order to prove that the group $\cal{E}_{1}(\Q(t))$ is infinite we note that $\cal{E}_{1}$ contains the point
\begin{equation*}
V=\phi((0,-1))=(X,Y)=(-6a^2t^2, -54at^2(4bt^2-1)).
\end{equation*}
In order to show that $V$ is of infinite order we put $t=1$. We note that the $X$-th coordinate of specialization at $t=1$ of the point $[2]V$ has the form
\begin{equation*}
X_{[2]V_{1}}=\frac{3(4 + 3 a^4 - 32 b + 64 b^2) (12 + a^4 - 96 b +192 b^2)}{16 a^2 (4b-1)^2}.
\end{equation*}
Quick computation reveals that the remainder of the division of the numerator by the denominator with respect to $b$ is equal to $9a^{8}$ and thus is non-zero provided $a\neq 0$. Invoking now the Nagell-Lutz theorem (see \cite[p. 78]{Sko}) we get that $V_{1}$ is of infinite order on the curve $\cal{E}_{1,1}$ - the specialization of $\cal{E}_{1}$ at $t=1$, and thus $V$ is of infinite order on the curve $\cal{E}_{1}$.

Now, it is an easy task to obtain the statement of our theorem . For $m=2,3,\ldots $ we compute the point $[m]V=\sum_{i=1}^{m}V$ on the curve $\cal{E}_{1}$; next we calculate the corresponding point $\phi^{-1}([m]V)=(P_{m},S_{m})$ on $\cal{C}_{1}$ and solve the equation $F_{P_{m}}(Q)=0$. We put the calculated roots into the expression for $P$ and get various $\Q(t)$-rational solutions of the equation (\ref{special}) and the system (\ref{i=1:n=4}). For example the point $[2]V$ leads us through the calculation of $\phi^{-1}([2]V)$ and the computation of $Q$ to the following solution of the system (\ref{i=1:n=4}):

\begin{align*}
x_{1}&=\frac{8a(4bt^2-1)}{-64b^2t^4+(a^4+32b)t^2-4},\\
x_{2}&=-\frac{32 a b t^2 \left(4 b t^2-1\right)}{\left(a^2 t-8 b t^2+2\right) \left(a^2 t+8 b t^2-2\right)},\\
x_{3}&=\frac{\left(a^2 t+8 b t^2-2\right)^3}{16 a t \left(4 b t^2-1\right) \left(a^2 t-8 b t^2+2\right)},\\
x_{4}&=a-x_{1}-x_{2}-x_{3}.
\end{align*}

In order to tackle the system
\begin{equation}\label{i=1:general n}
\sigma_{1}(x_{1},\ldots,x_{n})=a,\quad \sigma_{n}(x_{1},\ldots,x_{n})=b
\end{equation}
with $n\geq 5$ we use the following reasoning. Let $x'_{5},\ldots,x'_{n}$ be rational parameters and put $p=\sum_{i=5}^{n}x'_{i}$ and $q=\prod_{i=5}^{n}x'_{i}$. From our reasoning we know that the system (\ref{i=1:n=4}) has infinitely many solutions $(x'_{1,j},\ldots,x'_{4,j}), j\in\N$, depending on one parameter $t$ for $A=a-p$ and $B=\frac{b}{q}$. This immediately implies that for each $j\in\N$ the $n$-tuple of the following form
\begin{equation*}
x_{1}=x'_{1,j},\;x_{2}=x'_{2,j},\;x_{3}=x'_{3,j},\;x_{4}=x'_{4,j},\;x_{i}=x'_{i}\;\mbox{for}\;i=5,\ldots,n
\end{equation*}
solves the system (\ref{i=1:general n}).
\end{proof}

\section{Solutions of the system (\ref{mainsys2})}\label{Section3}

In this section we consider the general system (\ref{mainsys2}) for $2\leq i\leq n$.

\begin{thm}\label{thm2}
Let $n\geq 4,\;2\leq i\leq n$ and $a, b$ be rational numbers with $b\neq 0$. Under these assumptions the system {\rm (\ref{mainsys2})} has infinitely many rational parametric solutions depending on $n-3$ free parameters.
\end{thm}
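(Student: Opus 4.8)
The plan is to mimic the strategy of Theorem \ref{thm1}: first solve the system completely in the base case $n=4$ with the \emph{smallest nontrivial index}, namely
\begin{equation*}
\sigma_{2}(x_{1},x_{2},x_{3},x_{4})=a,\quad \sigma_{4}(x_{1},x_{2},x_{3},x_{4})=x_{1}x_{2}x_{3}x_{4}=b,
\end{equation*}
and then bootstrap to all $n\geq 5$ and all admissible $i$ by fixing the extra variables. Since the bootstrapping from $n=4$ to general $n$ in Theorem \ref{thm1} only used the specialization of free parameters $x_{5}',\ldots,x_{n}'$ and the expansion of $\sigma_{1}$ and $\sigma_{n}$ in terms of the four ``active'' variables, I expect the same device to work here: writing $\sigma_{i}$ and $\sigma_{n}$ for the full $n$-tuple as polynomials in the elementary symmetric functions of $(x_{1},\ldots,x_{4})$ and of the specialized tail $(x_{5}',\ldots,x_{n}')$, I can absorb the tail contributions into a reduced target pair $(A,B)$ and reduce to a four-variable system of the same shape. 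The only subtlety is that for general $i$ the coefficient linking $\sigma_{i}$ of the whole tuple to $\sigma_{2}$ of the active block is not constant, so some care is needed to check that the reduced targets $A,B$ can be taken to be arbitrary (with $B\neq 0$), which is exactly what the base case must supply.

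For the base case I would again eliminate one variable and force solvability of the resulting quadratic by making a discriminant a square. Concretely, from $x_{1}x_{2}x_{3}x_{4}=b$ I solve for $x_{4}=b/(x_{1}x_{2}x_{3})$ and substitute into $\sigma_{2}=a$; collecting in one variable, say $x_{3}$, gives a quadratic (after clearing the denominator) whose discriminant $\Delta$ must be a square in $\Q(t)$. As in the proof of Theorem \ref{thm1}, I would introduce a substitution of the form $x_{j}=c\,t^{2}x_{1}$ (with $c$ chosen in terms of $a,b$) to turn the square-free part of $\Delta$ into a quartic in the remaining free variable $P=x_{1}$, giving a genus-one curve $\mathcal{C}_{2}$ carrying an obvious $\Q(t)$-rational point. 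Treating that point as the point at infinity and applying the standard transformation (Mordell, \cite[p.\ 77]{Mor}), $\mathcal{C}_{2}$ becomes an elliptic curve $\mathcal{E}_{2}/\Q(t)$ in Weierstrass form.

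To produce infinitely many parametric solutions I must exhibit a point of infinite order on $\mathcal{E}_{2}(\Q(t))$. The clean way, exactly as before, is to locate an explicit $\Q(t)$-point $V$ coming from the rational point on $\mathcal{C}_{2}$, specialize at a convenient value such as $t=1$, and compute the $X$-coordinate of $[2]V$; if the specialized point has non-integral or manifestly infinite-order coordinates, the Nagell--Lutz criterion (\cite[p.\ 78]{Sko}) rules out torsion, and the specialization homomorphism then forces $V$ to have infinite order over $\Q(t)$. Running through the multiples $[m]V$, pulling back to $\mathcal{C}_{2}$, and solving the quadratic for the eliminated coordinate yields the desired $(n-3)$-parameter family (one parameter $t$ from the curve, plus the $n-4$ free tail variables). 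The main obstacle I anticipate is computational rather than conceptual: the quadratic-to-quartic reduction for $\sigma_{2}$ is messier than the product case of Theorem \ref{thm1}, so choosing the substitution $x_{j}=c\,t^{2}x_{1}$ that keeps the quartic smooth and keeps $V$ of infinite order for \emph{all} admissible $a,b$ (in particular allowing $a=0$, since here only $b\neq 0$ is assumed) will require a careful, explicit choice of $c$ and a verification that the relevant discriminant and the numerator-versus-denominator remainder for $X_{[2]V}$ do not vanish identically.
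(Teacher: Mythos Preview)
Your bootstrapping step contains a genuine gap. In Theorem \ref{thm1} the reduction from $n$ to $4$ works because $\sigma_{1}$ and $\sigma_{n}$ decouple cleanly:
\[
\sigma_{1}(x_{1},\ldots,x_{n})=\sigma_{1}(x_{1},\ldots,x_{4})+\sigma_{1}(x'_{5},\ldots,x'_{n}),\qquad
\sigma_{n}(x_{1},\ldots,x_{n})=\sigma_{4}(x_{1},\ldots,x_{4})\cdot\sigma_{n-4}(x'_{5},\ldots,x'_{n}),
\]
so fixing the tail merely shifts and rescales the targets. For a general index $i$, however,
\[
\sigma_{i}(x_{1},\ldots,x_{n})=\sum_{k=0}^{4}\sigma_{k}(x_{1},x_{2},x_{3},x_{4})\,\sigma_{i-k}(x'_{5},\ldots,x'_{n}),
\]
and after substituting the known value of $\sigma_{4}(x_{1},\ldots,x_{4})=B$ you are left with an equation of the form
\[
s_{i-1}\,\sigma_{1}+s_{i-2}\,\sigma_{2}+s_{i-3}\,\sigma_{3}=\text{const},
\]
where $s_{j}=\sigma_{j}(x'_{5},\ldots,x'_{n})$. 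This is \emph{not} of the shape $\sigma_{2}(x_{1},\ldots,x_{4})=A$, so your base case does not feed the induction. You cannot simply kill the unwanted coefficients $s_{i-1},s_{i-3}$ by choice of tail while keeping $n-3$ genuinely free parameters (for instance when $n=5$ and $i=2$ there is a single tail variable $x'_{5}$, and forcing $s_{1}=x'_{5}=0$ kills the product). Your proposal also does not treat the cases $n=4$, $i=3$ (or the reduction $i\mapsto n-i$ that would cover them).

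The paper avoids this obstacle by separating only \emph{three} active variables $P,Q,R$ and keeping all $n-3$ tail variables as parameters from the outset. It then writes the reduced system as
\[
uPQR+v(PQ+QR+RP)+w(P+Q+R)+t=a,\qquad mPQR=b,
\]
with $u=\sigma_{i-3}(\overline{X}),\;v=\sigma_{i-2}(\overline{X}),\;w=\sigma_{i-1}(\overline{X}),\;t=\sigma_{i}(\overline{X}),\;m=\sigma_{n-3}(\overline{X})$, and works over the function field $\Q(u,v,w)$ treating these as independent. Solving the second equation for $P$ and substituting gives a quadratic $G_{R}(Q)$; the discriminant condition is a quartic curve $\mathcal{C}_{2}$ with the visible point $(0,bv)$, whose Weierstrass model $\mathcal{E}_{2}$ carries a point of infinite order (verified via a function-field Nagell--Lutz argument, splitting into the cases $i=2$ where $v=1$ and $i\geq 3$ where the coordinates are non-polynomial). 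So the elliptic-curve mechanism you sketched is correct in spirit, but it must be run on this three-variable mixed system over $\Q(u,v,w)$, not on the single $\sigma_{2}/\sigma_{4}$ base case.
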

\begin{proof}
Let us note that we can assume that $i\leq n/2$. Indeed, if $\overline{X}=(x_{1},\ldots,x_{n})$ is non-zero solution of the system (\ref{mainsys2}) with $a=\frac{A}{B}$ and $b=\frac{1}{B}$ then $\overline{Y}=\frac{1}{\overline{X}}=(\frac{1}{x_{1}},\ldots, \frac{1}{x_{n}})$ is solution of the system
$\sigma_{n-i}(\overline{Y})=A,\;\sigma_{n}(\overline{Y})=B$. Indeed, we have
\begin{equation*}
\frac{A}{B}=\sigma_{i}(\overline{X})=\sigma_{i}\Big(\frac{1}{\overline{Y}}\Big)=\frac{\sigma_{n-i}(\overline{Y})}{\sigma_{n}(\overline{Y})}
\end{equation*}
and
\begin{equation*}
\frac{1}{B}=\sigma_{n}(\overline{X})=\sigma_{n}\Big(\frac{1}{\overline{Y}}\Big)=\frac{1}{\sigma_{n}(\overline{Y})}
\end{equation*}
and thus $\sigma_{n}(\overline{Y})=B$ and $\sigma_{n-i}(\overline{Y})=A$. We also note that it is enough to assume that $a, b$ are integers with $b\neq 0$. Indeed, let $A, B$ be rational numbers with common denominator $D$ and let us suppose that $\overline{X}=(x_{1},\ldots,x_{n})$ is a solution of the system (\ref{mainsys2}) with $a=AD^{i}, b=BD^{n}$ and $x_{i}\neq 0$ for $i=1,\ldots,n$. We immediately deduce that $\overline{Y}=\frac{\overline{X}}{D}=\Big(\frac{x_{1}}{D},\ldots,\frac{x_{n}}{D}\Big)$ is the solution of the system
$\sigma_{i}(\overline{Y})=A,\;\sigma_{n}(\overline{Y})=B$.

\bigskip

Let us fix $n\geq 4$ and $2\leq i\leq \frac{n}{2}$. In order to shorten the notation let us put
\begin{equation*}
\overline{X}=(x_{1},x_{2},\ldots, x_{n-3}),\quad x_{n-2}=P,\quad x_{n-1}=Q,\quad x_{n}=R.
\end{equation*}
Thus, we can write our system as $\sigma_{i}(\overline{X},P,Q,R)=a,\;\sigma_{n}(\overline{X},P,Q,R)=b$. Now let us note the following equality
$\sigma_{n}(\overline{X},P,Q,R)=\sigma_{n-3}(\overline{X})PQR$ and
\begin{align*}
\sigma_{i}&(\overline{X},P,Q,R)=\sigma_{i-1}(\overline{X},P,Q)R+\sigma_{i}(\overline{X},P,Q)\\
                              &=(\sigma_{i-2}(\overline{X},P)Q+\sigma_{i-1}(\overline{X},P))R+\sigma_{i-1}(\overline{X},P)Q+\sigma_{i}(\overline{X},P)\\
                              &=(\sigma_{i-3}(\overline{X})P+\sigma_{i-2}(\overline{X}))Q+(\sigma_{i-2}(\overline{X})P+\sigma_{i-1}(\overline{X}))R\\
                              &\quad +(\sigma_{i-2}(\overline{X})P+\sigma_{i-1}(\overline{X}))Q+\sigma_{i-1}(\overline{X})P+\sigma_{i}(\overline{X})\\
                              &=\sigma_{i-3}(\overline{X})PQR+\sigma_{i-2}(\overline{X})(PQ+QR+RP)+\sigma_{i-1}(\overline{X})(P+Q+R)+\sigma_{i}(\overline{X}).
\end{align*}
In these expansion we use the well known convention which says that $\sigma_{m}(\overline{X})$ is defined to be 0 for $m<0$ and is equal to 1 in case of $m=0$. Putting now
\begin{equation*}
u=\sigma_{i-3}(\overline{X}),\quad v=\sigma_{i-2}(\overline{X}),\quad w=\sigma_{i-1}(\overline{X}),\quad t=\sigma_{i}(\overline{X}),\quad m=\sigma_{n-3}(\overline{X})
\end{equation*}
we see that the system (\ref{mainsys2}) is equivalent to
\begin{equation}\label{mainsys2a}
\begin{cases}
\begin{array}{lll}
  uPQR+v(PQ+QR+RP)+w(P+Q+R)+t & = & a, \\
  mPQR & = & b.
\end{array}
\end{cases}
\end{equation}
Let us note that in case of $i=2$ we have $u=0, v=1$. In case of $i=3$ we have $u=1$. Moreover, we note that we can work in the field $\Q(u,v,w)$, where $u, v, w$ are treated as independent variables. In order to solve the above system we use a similar approach as in the proof of Theorem \ref{thm1}. Solving the second equation from (\ref{mainsys2a}) with respect to $P$ and putting the solution into the first equation we are left with the quadratic equation in $Q$:
\begin{equation*}
G_{R}(Q):=mR(Rv+w)Q^2+(mwR^2+(m(t-a)+bu)R+bv)Q+b(Rv+w)=0.
\end{equation*}
It is enough to show that the set of $R\in\Q(u,v,w)$ for which the above equation has a solution (with respect to $Q$) in  $\Q(u,v,w)$ is infinite. Equivalently, the discriminant $\Delta(R)$ of the polynomial $G_{R}$ should be a square in $\Q(u,v,w)$. This leads us to the curve
\begin{equation*}
\cal{C}_{2}:\;S^2=(mwR^2+(m(t-a)+bu)R+bv)^2-4bmR(Rv+w)^2
\end{equation*}
defined over the field $\Q(u,v,w)$. The discriminant of the quartic polynomial defining the curve $\cal{C}_{2}$ is non zero for $b\neq 0$ as an element of the field $\Q(u,v,w)$. This implies that the curve $\cal{C}_{2}$ is smooth. Let us also note that the $\Q(u,v,w)$-rational point $V=(0,bv)$ lies on the curve $\cal{C}_{2}$. If we treat $V$ as a point at infinity and use the method described in \cite{Mor} one more time, we conclude that $\cal{C}_{2}$ is birationally equivalent with the elliptic curve $E$ given by the Weierstrass equation
\begin{equation*}
\cal{E}_{2}:\;Y^2=X^3-27AX+54B,
\end{equation*}
where $A, B\in\Q(u,v,w)$ are given by
\begin{align*}
A&=(bu-(a-t)m)\times\\
    &\quad \quad [-b^3u^3+3b^2((a-t)u^2-8v^3)m-3bu(a-t)^2m^2+(a-t)^3m^3\\
    &\quad \quad\quad  -24bmv((a-t)m-bu)w-24bm^2w^3],\\
B&=b^6 u^6-6b^5u^3((a- t)u^2-6v^3)m+3b^4[(5a^2-10at+5t^2)u^4\\
      &\quad -36(av^3-t)u^2v^3+72 v^6]m^2-4 b^3(a-t)^2u(5(a-t)u^2-27 v^3)m^3\\
      &\quad +3b^2(a-t)^3(5(a-t)u^2 - 12 v^3)m^4 -6bu(a-t)^5m^5+(a - t)^6m^6\\
      &\quad -36bmv((a-t)m - bu)[-b^3u^3+3b^2((a-t)u^2- 4 v^3)m-3bu(a - t)^2m^2+(a-t)^3m^3]w\\
      &\quad +216b^2m^2v^2((a-t)m-bu)^2w^2\\
      &\quad -36bm^2(-b^3u^3+3b^2((a-t)u^2-4v^3)m-3bu(a - t)^2m^2+(a - t)^3m^3)w^3\\
      &\quad +432b^2m^3v((a-t)m -bu)w^4+216b^2m^4w^6.
\end{align*}
We do not present the explicit equations for the coordinates of the map $\psi:\;\cal{C}_{2}\rightarrow \cal{E}_{2}$ and expression for discriminant because they are very complicated. However, let us note that $\Delta(\cal{E}_{2})=-2^{2}3^{9}(A^3-B^2)$.

In order to prove that the group $\cal{E}_{2}(\Q(\overline{X}))$ is infinite we note that $\cal{E}_{2}$ contains the point $V=\psi((0,-bv))=(X,Y)$, where
\begin{align*}
X&=\frac{3}{v^2}(((a-t)m-bu)^2v^2+12bmv^3w+12mv((a-t)m-bu)w^2+12 m^2w^4),\\
Y&=\frac{108m}{v^3}(bv^3+v((a-t)m-bu)m+mw^3)(bv^3+v((a-t)m-bu)m+2mw^3).
\end{align*}
 In order to show that the point $V$ is of infinite order we invoke the well known generalization of the classical Nagell-Lutz theorem. The first generalization states that if on the elliptic curve $E:\;Y^2=X^3+\cal{A}X+\cal{B}$ over the field $\Q(t_{1},\ldots,t_{k})$ of rational functions in $k$ variables with $\cal{A}, \cal{B}\in\Z[t_{1},\ldots,t_{k}]$, there is an integral point $(X,Y)$ which satisfies the condition $Y^2\nmid \Delta(E)$ then $(X,Y)$ is of infinite order in the group $E(\Q(t_{1},\ldots,t_{k}))$. Under the same assumptions on $E$ we also know that the torsion points on the elliptic curve $E$ have coordinates in $\Z[t_{1},\ldots,t_{k}]$ (see \cite[p. 177]{Sil} or \cite[p. 268]{Con}). We use the generalizations mentioned in order to prove that $V$ is of infinite order.

 If $i=2$ then we have that $v=1$ and $X, Y$ are polynomials. We thus consider the curve $\cal{E}_{2,1}$ which is the specialization of the curve $\cal{E}_{2}$ at $v=1$ together with the specialized point $V_{1}$ which comes from $V$. Brute force computation reveals that in our case the remainder of the division of $\Delta(\cal{E}_{2,1})$ with respect to $m$ by the square of the $Y$-th coordinate of the point $V_{1}$ is non zero as an element of the ring of polynomials $\Z[u,w]$. The first generalization of the Nagell-Lutz theorem implies that $V$ is of infinite order on $\cal{E}_{2}$. If $i\geq 3$ then the coordinates of $V$ do not lie in $\Z[u,v,w]$. From the second generalization of the Nagell-Lutz theorem we immediately deduce that $V$ is a point of infinite order on $\cal{E}_{2}$. We thus now that in each case $V$ is of infinite order in the group $\cal{E}_{2}(\Q(u,v,w))$.

Now, it is an easy task to obtain the statement of our theorem. For $k=2,3,\ldots $ we compute the point $[k]V=\sum_{i=1}^{k}V$ on the curve $\cal{E}_{2}$; next we calculate the corresponding point $\phi^{-1}([k]V)=(R_{k},S_{k})$ on $\cal{C}_{2}$ and solve the equation $G_{R_{k}}(Q)=0$. We put the calculated roots into the expression for $P$ and get various $\Q(\overline{X})$-rational solutions of the system (\ref{mainsys2a}) and the system (\ref{mainsys2}).

In order to see an example of solutions of the system (\ref{mainsys2a}) we computed the point $[2]V$ which lead us through the calculation of $\psi^{-1}([2]V)$ and the computation of $P$ to the following solution of the system (\ref{mainsys2a}) and thus the system (\ref{mainsys2}) (with $x_{n-2}=P,\;x_{n-1}=Q,\;x_{n}=R)$:
\begin{align*}
P&=\frac{w(bv^3-mw^3)}{v(bv^3+((a-t)m-bu)vw + 2mw^3)},\\
Q&=-\frac{bv^2(bv^3+((a-t)m-bu)vw + 2mw^3)}{mw^2(2bv^3+((a-t)m-bu)vw + mw^3)},\\
R&=-\frac{w(2bv^3+((a-t)m-bu)vw+mw^3)}{v(bv^3-mw^3)}.
\end{align*}
Summing up, we see that $n$-tuple $(x_{1},\ldots, x_{n})=(x_{1},\ldots, x_{n-3},P,Q,R)$ solves the system (\ref{mainsys2}).
\end{proof}

Having proved the Theorem \ref{thm1} and Theorem \ref{thm2} above, we immediately deduce the following:

\begin{cor}
Let $n\geq 4$ and $1\leq i< n$ and let us put $\overline{X}=(x_{1},\ldots,x_{n-3})$. Then for every positive integer $k$, there exist infinitely many primitive sets of $k$ $n$-tuples of polynomials from $\Z[\overline{X}]$ with the same $i$-th elementary symmetric function value and the same product.
\end{cor}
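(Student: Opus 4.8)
The plan is to deduce the Corollary directly from Theorems~\ref{thm1} and~\ref{thm2}, which already hand us the hard analytic input: infinitely many $\Q(\overline{X})$-rational parametric solutions of the system $\sigma_i=a,\ \sigma_n=b$ for any prescribed non-zero values (with $ab\neq 0$ in the case $i=1$, and $b\neq 0$ for $2\leq i<n$). The only genuinely new content of the Corollary is the passage from \emph{rational-function} solutions to \emph{polynomial} solutions lying in $\Z[\overline{X}]$, together with the extraction of $k$ distinct \emph{primitive} $n$-tuples sharing the same symmetric-function values. I would organize the argument in three short steps: clearing denominators to land in $\Z[\overline{X}]$; rescaling to enforce primitivity; and producing $k$ genuinely distinct tuples from the infinitely many parametrized families.

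First I would fix admissible target values $a,b$ (any non-zero rationals when $i=1$, any non-zero $b$ with arbitrary $a$ when $i\geq 2$) and invoke the relevant theorem to obtain, for each $m=2,3,\dots$, a solution $n$-tuple whose entries are rational functions in the free parameters $\overline{X}=(x_1,\dots,x_{n-3})$ and the auxiliary variable $t$. Each entry is a ratio of polynomials in $\Z[\overline{X}]$ (after clearing the denominators of $a,b$ as in the opening reductions of both proofs). To obtain genuine polynomials I would use the homogeneity of the elementary symmetric polynomials: scaling each coordinate $x_i\mapsto \lambda x_i$ multiplies $\sigma_j$ by $\lambda^{j}$. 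Choosing a common denominator $\delta\in\Z[\overline{X}]$ for all $n$ entries of a given tuple and setting $\lambda=\delta$, the scaled tuple $(\delta x_1,\dots,\delta x_n)$ has all entries in $\Z[\overline{X}]$, and now satisfies $\sigma_i=a\,\delta^{i}$, $\sigma_n=b\,\delta^{n}$. This replaces the fixed target $(a,b)$ by $(a\delta^i,b\delta^n)$, but since we are free to prescribe the common symmetric-function values for the \emph{whole} collection of $k$ tuples, this causes no difficulty provided all $k$ tuples are cleared by a \emph{single} common multiplier.

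The main obstacle — and the step deserving the most care — is therefore synchronizing the $k$ tuples so that they share one set of values while simultaneously being primitive and distinct. Here I would proceed as follows. From the infinite family $\{[m]V\}_{m\geq 2}$ produced in the theorems, select $k$ indices $m_1<\cdots<m_k$ giving $k$ distinct solution tuples $\overline{X}^{(1)},\dots,\overline{X}^{(k)}$ of the system with a common $(a,b)$; distinctness for infinitely many $m$ follows because the point $V$ has infinite order, so the rational functions $R_{m}$ (and hence the tuples) cannot eventually coincide. Let $\delta_\ell\in\Z[\overline{X}]$ be a denominator clearing $\overline{X}^{(\ell)}$, and put $D=\prod_{\ell=1}^k \delta_\ell$. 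Scaling every tuple by the \emph{same} factor $D$ yields $k$ tuples in $\Z[\overline{X}]$, all satisfying the common values $\sigma_i=a\,D^{i}$ and $\sigma_n=b\,D^{n}$. Finally, to enforce primitivity I divide each scaled tuple by the content (the $\gcd$ in $\Z[\overline{X}]$ of its $n$ entries); absorbing these content factors into a further renaming of $a,b$ via the same homogeneity, one arrives at $k$ primitive $n$-tuples in $\Z[\overline{X}]$ with identical $\sigma_i$ and $\sigma_n$. Since the free parameters $\overline{X}$ range over infinitely many specializations (equivalently, since the construction works over the function field and then specializes), one obtains infinitely many such primitive $k$-sets, completing the deduction for every $1\leq i<n$.
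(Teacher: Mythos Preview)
Your approach is essentially the paper's: pull $k$ solutions from the infinite family on the elliptic curve, pass to a common denominator, and scale. There is, however, a genuine gap in your primitivity step.

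You propose to divide \emph{each} scaled tuple by its own content $c_\ell=\gcd$ of its $n$ entries. After this, tuple $\ell$ satisfies $\sigma_i=aD^{i}/c_\ell^{\,i}$ and $\sigma_n=bD^{n}/c_\ell^{\,n}$. There is no reason for the contents $c_1,\dots,c_k$ to coincide, so the $k$ tuples will in general no longer share the same $\sigma_i$- and $\sigma_n$-values; ``absorbing these content factors into a further renaming of $a,b$'' cannot repair this, because a single renaming cannot accommodate $k$ different scale factors. The notion needed here is that the \emph{set} of $k$ tuples is primitive (the $\gcd$ of all $kn$ entries is $1$), not that each tuple is individually primitive. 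The paper achieves this directly by writing all $3k$ rational functions $P_j,Q_j,R_j$ over one common denominator $d\in\Z[\overline{X}]$ chosen so that $\gcd\bigl(\gcd_{j}(p_j,q_j,r_j),\,d\bigr)=1$; then the tuples $(x_1 d,\dots,x_{n-3} d,\,p_j,q_j,r_j)$ automatically form a primitive set with common values $ad^{i},\,bd^{n}$. Your argument is repaired by the same device: either arrange the common denominator to be coprime to the collection of numerators from the start, or divide the whole $k$-set by the single overall content rather than by the individual contents.

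A smaller point: the ``infinitely many'' in the Corollary refers to infinitely many such $k$-sets of \emph{polynomials} in $\Z[\overline{X}]$, so specialization of $\overline{X}$ is not the relevant mechanism. The source of infinitude is the one you already identified earlier in the argument---infinitely many choices of indices $m_1<\cdots<m_k$ on the elliptic curve give genuinely different $k$-sets over the function field, hence different polynomial $k$-sets after clearing denominators.
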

\begin{proof}
Let the triple $(P_{j}, Q_{j}, R_{j})$ for $j=1,\ldots, k$ be a solution of the system (\ref{mainsys2a}). Let us write
\begin{equation*}
P_{j}=\frac{p_{j}}{d}, \;Q_{j}=\frac{q_{j}}{d},\;R_{j}=\frac{r_{j}}{d}
\end{equation*}
with $\op{gcd}(\{\op{gcd}(p_{j},q_{j},r_{j}):\;j=1,\ldots, k\},d)=1$ and $p_{j}, q_{j}, r_{j}\in\Z[\overline{X}]$. Then we have
\begin{equation}\label{values}
\sigma_{i}(y_{1,j},y_{2,j},\ldots,y_{n,j})=ad^{i},\quad \sigma_{n}(y_{1,j},y_{2,j},\ldots,y_{n,j})=bd^{n},
\end{equation}
for $j=1,\ldots,k$ and
where
\begin{equation*}
y_{j,l}=x_{j}d\quad\mbox{for}\quad j=1,\ldots, n-3,\;l=1,\ldots, k,
\end{equation*}
and $y_{n-2,l}=p_{j}, y_{n-1,l}=q_{l},\;y_{n,l}=r_{l}$. If two sets of solutions
$\{(y_{1,j},\ldots, y_{n,j}), j\leq k\}$ and $\{(y'_{1,j},\ldots, y'_{n,j}), j\leq k\}$ coincide, then from (\ref{values}) we get $d=d'$ and the $n$-tuples coincide.
Because the set of solutions of the system (\ref{mainsys2a}) is infinite we see that for each $k$ we can find $k$ $n$-tuples of polynomials which  satisfy the system (\ref{mainsys2}).
\end{proof}

\section{Some solutions of the system (\ref{mainsys3})}\label{Section4}

In this section we investigate the general system (\ref{mainsys3}). We were unable to prove that for generic choices of $a, b$ the system (\ref{mainsys3}) has infinitely many rational parametric solutions. However, we believe that the following is true:

\begin{conj}\label{conj1}
For any given $1\leq i<j$ and for each pair of rational numbers $a, b$ there exist a positive integer $n$ such that $j<n$ and the system of Diophantine equations
\begin{equation*}
  \sigma_{i}(x_{1},\ldots, x_{n})=a, \quad \sigma_{j}(x_{1},\ldots, x_{n})=b,
\end{equation*}
has infinitely many solutions in rational numbers $x_{i},\;i=1,2,\ldots, n$.
\end{conj}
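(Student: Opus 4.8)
The statement to be argued is Conjecture \ref{conj1}, so the plan is to describe an attack that reduces it to a single Diophantine condition, to show that this condition is manageable in the first interesting case, and then to say exactly where the difficulty lies. The guiding idea is to imitate the mechanism of Theorems \ref{thm1} and \ref{thm2} while exploiting the crucial extra freedom that here $n$ may be chosen as large as we please. First I would fix $n>j$ and single out two of the variables, say $x_{n-1}$ and $x_{n}$, writing $s=x_{n-1}+x_{n}$ and $\pi=x_{n-1}x_{n}$; let $\overline{Y}=(x_{1},\ldots,x_{n-2})$ denote the remaining $n-2$ coordinates, which I treat as free parameters. Using the concatenation identity $\sigma_{k}(\overline{Y},x_{n-1},x_{n})=\sigma_{k}(\overline{Y})+\sigma_{k-1}(\overline{Y})s+\sigma_{k-2}(\overline{Y})\pi$ for $k=i,j$ (with the usual convention $\sigma_{m}=0$ for $m<0$, $\sigma_{0}=1$), the system (\ref{mainsys3}) becomes the linear system
\begin{equation*}
\begin{cases}
\sigma_{i-1}(\overline{Y})\,s+\sigma_{i-2}(\overline{Y})\,\pi = a-\sigma_{i}(\overline{Y}),\\
\sigma_{j-1}(\overline{Y})\,s+\sigma_{j-2}(\overline{Y})\,\pi = b-\sigma_{j}(\overline{Y}),
\end{cases}
\end{equation*}
in the two unknowns $s,\pi$. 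Its determinant $D(\overline{Y})=\sigma_{i-1}(\overline{Y})\sigma_{j-2}(\overline{Y})-\sigma_{i-2}(\overline{Y})\sigma_{j-1}(\overline{Y})$ does not vanish identically (for $(i,j)=(2,3)$ it equals $\sigma_{1}^{2}-\sigma_{2}$, for instance), so over $\Q(\overline{Y})$ one solves uniquely for $s=s(\overline{Y})$ and $\pi=\pi(\overline{Y})$ as explicit rational functions of the parameters.

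Once $s$ and $\pi$ are known, the eliminated variables $x_{n-1},x_{n}$ are the two roots of $Z^{2}-sZ+\pi$, hence are rational precisely when $s(\overline{Y})^{2}-4\pi(\overline{Y})$ is a square. Thus I would reduce Conjecture \ref{conj1} to exhibiting infinitely many rational points on the hypersurface
\begin{equation*}
\cal{H}:\quad w^{2}=s(\overline{Y})^{2}-4\pi(\overline{Y}),
\end{equation*}
a single square condition in the $n-2$ parameters $\overline{Y}$. In the lowest case $i=1,\ j=2$ one finds that $s$ is linear and $\pi$ quadratic in $\overline{Y}$, so $\cal{H}$ is a genuine quadric; regarding $w^{2}-s^{2}+4\pi$ as a quadratic form in $n-1$ variables, it is indefinite and nondegenerate, hence isotropic for $n$ large by Hasse--Minkowski, and a quadric with one rational point has infinitely many. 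This already proves the conjecture for $(i,j)=(1,2)$ and localizes the whole difficulty in the higher-degree behaviour of $\sigma_{j}$ for larger $j$.

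For general $i<j$ the freedom in $n$ is the lever that was unavailable in Theorems \ref{thm1} and \ref{thm2}: enlarging $n$ increases the number of free parameters, so I would specialize all but one or two of the $x_{m}$ to well-chosen constants (for instance $x_{1}=\cdots=x_{n-3}=1$, turning the $\sigma_{k}(\overline{Y})$ into binomial coefficients), cutting $\cal{H}$ down to a plane curve $\cal{C}$ depending on $n$. Following the strategy already used in the paper I would then locate an obvious rational point on $\cal{C}$ (such as one coming from $\pi=0$), put $\cal{C}$ into Weierstrass form as in \cite[p.~77]{Mor}, and verify that the resulting point has infinite order by a numerical specialization together with the Nagell--Lutz argument of the previous sections, thereby producing infinitely many solutions for the given $n$.

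The hard part, and the reason the statement is only conjectured, is to carry this programme through \emph{uniformly} in $a$ and $b$. When $j=n$ the second equation is the full product $\sigma_{n}=\sigma_{n-3}(\overline{X})PQR$, and it is precisely this factorization that rendered the discriminant curves of Theorems \ref{thm1} and \ref{thm2} tractable and furnished the infinite-order point. For $j<n$ no such multiplicative structure is present, so the rational functions $s,\pi$ and the discriminant $s^{2}-4\pi$ are genuinely complicated; I cannot guarantee that $\cal{H}$ carries a rational point for every pair $(a,b)$, nor that the distinguished point on the auxiliary elliptic curve avoids being torsion simultaneously for all $a,b$. For special values of $a,b$ — those for which $s^{2}-4\pi$ acquires a visible square after specialization — the argument does go through, and this is the kind of partial result I would expect to be provable; the obstruction to the general statement is exactly the absence, for $j<n$, of the favourable factorization enjoyed by the top symmetric polynomial.
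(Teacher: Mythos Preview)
The statement is a \emph{conjecture}; the paper does not prove it and says so explicitly, establishing only the much weaker Theorem~\ref{thm3} in which $a,b$ are constrained to be values $\sigma_{i}(t_{1},\dots,t_{n}),\sigma_{j}(t_{1},\dots,t_{n})$. Most of your proposal is accordingly a heuristic outline, and your closing paragraph correctly identifies the obstruction. However, the one place where you do claim a complete argument --- the case $(i,j)=(1,2)$ --- contains a genuine error.

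Your quadric $w^{2}=s(\overline{Y})^{2}-4\pi(\overline{Y})$ is \emph{not} indefinite. With $\overline{Y}=(x_{1},\dots,x_{n-2})$ one has $s=a-\sigma_{1}(\overline{Y})$ and $\pi=b-\sigma_{2}(\overline{Y})-\sigma_{1}(\overline{Y})\,s$, and a short computation gives
\begin{equation*}
w^{2}+\bigl(\sigma_{1}(\overline{Y})-a\bigr)^{2}+2\sum_{m=1}^{n-2}x_{m}^{2}\;=\;2\bigl(a^{2}-2b\bigr).
\end{equation*}
The left side is a positive-definite form, so Meyer's theorem is irrelevant: when $a^{2}<2b$ there are no real points on $\cal{H}$, regardless of $n$. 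This is of course visible without any reduction, since $\sigma_{1}=a$ and $\sigma_{2}=b$ force $\sum_{m=1}^{n}x_{m}^{2}=a^{2}-2b$. Thus not only does your Hasse--Minkowski step fail, but the example $(i,j)=(1,2)$, $a=0$, $b=1$ shows that the conjecture as stated is actually false; any correct version would need a hypothesis excluding such obviously obstructed pairs.

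It is also worth noting that your reduction differs structurally from the paper's. You isolate \emph{two} variables and obtain a linear system in $(s,\pi)$, leaving a single square condition on a hypersurface in $\overline{Y}$. Theorem~\ref{thm3} isolates \emph{three} variables $P,Q,R$: after eliminating $P$ linearly one is left with a quadratic in $Q$ whose discriminant is a quartic in $R$, and the built-in rational point $(p,q,r)$ (coming from the choice $a=\sigma_{i}(\overline{T},p,q,r)$, $b=\sigma_{j}(\overline{T},p,q,r)$) makes this a genus-one curve with an explicit point of infinite order. Your two-variable scheme is tidier but, as the $(1,2)$ case shows, the resulting variety can be empty over $\Q$ for all $n$; the extra variable in the paper's approach is what buys a guaranteed rational point, at the price of restricting $a$ and $b$.
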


Although we were not able to prove the above conjecture we show that for suitably chosen polynomial values of $a$ and $b$ the system (\ref{mainsys3}) has parametric solutions. More precisely, we prove the following:

\begin{thm}\label{thm3}
Let $n\geq 3$ and $1\leq i<j\leq n$ be given. Let $t_{1}, \ldots, t_{n}$ be rational parameters. Then the system of Diophantine equations
\begin{equation}\label{specialsys}
  \sigma_{i}(x_{1},\ldots, x_{n})=\sigma_{i}(t_{1},\ldots, t_{n}), \quad \sigma_{j}(x_{1},\ldots, x_{n})=\sigma_{j}(t_{1},\ldots, t_{n}),
\end{equation}
has infinitely many solutions in rational functions $x_{i}\in\Q(t_{1},\ldots,t_{n})$.
\end{thm}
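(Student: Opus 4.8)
The plan is to imitate the strategy of Theorems \ref{thm1} and \ref{thm2}, but to exploit the feature that here the right-hand sides are not arbitrary: the tuple $(x_{1},\ldots,x_{n})=(t_{1},\ldots,t_{n})$ is automatically a solution of (\ref{specialsys}), so a rational point on the auxiliary curve will be available for free, and only the \emph{infinitude} of points will require work. First I would fix $x_{k}=t_{k}$ for $k=1,\ldots,n-3$, write $\overline{X}=(t_{1},\ldots,t_{n-3})$, and keep only $P=x_{n-2}$, $Q=x_{n-1}$, $R=x_{n}$ as genuine unknowns; with two equations and three unknowns this should leave a one-parameter family. Using the expansion of $\sigma_{i}(\overline{X},P,Q,R)$ recorded in the proof of Theorem \ref{thm2}, together with the analogous expansion for $\sigma_{j}$, both equations become linear in the three quantities $PQR$, $PQ+QR+RP$, $P+Q+R$, with coefficients $\sigma_{m}(\overline{X})$ lying in the field $K=\Q(t_{1},\ldots,t_{n})$.

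Next I would treat $P=p$ as the free parameter and set $\sigma=Q+R$, $\pi=QR$. Substituting $PQR=p\pi$, $PQ+QR+RP=\pi+p\sigma$, $P+Q+R=p+\sigma$ turns the system into two equations that are \emph{linear} in $(\pi,\sigma)$, whose coefficients and right-hand sides are linear in $p$. Solving by Cramer's rule expresses $\pi=\pi(p)$ and $\sigma=\sigma(p)$ as ratios of quadratics in $p$ over a common quadratic denominator $\Delta(p)$. The pair $Q,R$ is then rational exactly when $\sigma^{2}-4\pi$ is a square; since its denominator is the square $\Delta(p)^{2}$, this amounts to requiring that a polynomial $D(p)$ of degree at most four be a square, i.e. to finding rational points on the curve
\begin{equation*}
\cal{C}:\; y^{2}=D(p)
\end{equation*}
over $K$. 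The crucial gift of the special right-hand sides is that $(p,y)=(t_{n-2},\,\Delta(t_{n-2})(t_{n-1}-t_{n}))$ lies on $\cal{C}$: it corresponds to the trivial solution $(P,Q,R)=(t_{n-2},t_{n-1},t_{n})$, for which $\sigma^{2}-4\pi=(t_{n-1}-t_{n})^{2}$.

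It then remains to produce infinitely many points on $\cal{C}$. If $D$ is degenerate (a perfect square, or of degree $\le 2$ after cancellation, or if $\Delta\equiv 0$) the curve is rational and the single rational point already yields infinitely many points; such degeneracies occur only for special parameter values and are excluded by the genericity of $t_{1},\ldots,t_{n}$, which I would confirm by checking that the relevant leading and constant coefficients of $\Delta$ and $D$ (symmetric-function expressions such as $\sigma_{i-3}\sigma_{j-2}-\sigma_{j-3}\sigma_{i-2}$ in $\overline{X}$) are not identically zero. In the generic case $\cal{C}$ is an elliptic curve; I would put it in Weierstrass form by the method of \cite{Mor}, transport the point above to a point $V$, and argue that $V$ has infinite order. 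Each multiple $[m]V$ then yields a value of $p$, hence $\sigma,\pi$, and finally $Q,R\in K$, and the distinct $p$-coordinates give infinitely many distinct solutions $(t_{1},\ldots,t_{n-3},p,Q,R)$ of (\ref{specialsys}).

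The main obstacle is precisely this last step: proving that $V$ is of infinite order on an elliptic curve defined over the multivariate function field $\Q(t_{1},\ldots,t_{n})$. The cleanest route, following the computations in the proofs of Theorems \ref{thm1} and \ref{thm2}, is to specialize all the parameters $t_{k}$ to carefully chosen integers, obtaining a concrete elliptic curve over $\Q$, and then to verify infinite order by the generalized Nagell--Lutz criterion used there (for instance by checking that $[2]V$ fails to be integral, or that the square of a suitable coordinate does not divide the discriminant); the specialization theorem for elliptic surfaces then lifts infinite order back to $\Q(t_{1},\ldots,t_{n})$. One must also ensure in this specialization that $V$ is not one of the finitely many torsion points, which is safe as soon as $t_{n-1}\neq t_{n}$ keeps its $y$-coordinate nonzero and the accompanying height computation is nontrivial.
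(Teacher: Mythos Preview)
Your proposal is correct and follows essentially the same route as the paper: fix $x_{k}=t_{k}$ for $k\le n-3$, reduce to a system in $P,Q,R$ that is linear in their elementary symmetric functions, land on a quartic curve over $\Q(t_{1},\ldots,t_{n})$ with the built-in rational point coming from $(P,Q,R)=(t_{n-2},t_{n-1},t_{n})$, and argue infinite order via a Nagell--Lutz-type criterion. The only cosmetic difference is the elimination step---you fix $P=p$ and solve linearly for $(\sigma,\pi)=(Q+R,QR)$, whereas the paper eliminates $P$ and gets a quadratic in $Q$ with $R$ as parameter; by the symmetry in $P,Q,R$ these yield the same curve (your Cramer determinant $\Delta(p)$ is precisely the paper's leading coefficient $a(R)$), and the paper likewise separates out the genus-zero case $(i,j)=(1,2)$ and the degenerate cubic case $u_{1}w_{2}-u_{2}w_{1}=0$ that you flag.
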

\begin{proof}
In order to shorten the notation we introduce the parameters
\begin{equation*}
\begin{array}{llll}
  \overline{X}=(x_{1},\ldots, x_{n-3}), & P=x_{n-2}, & Q=x_{n-1}, & R=x_{n}, \\
  \overline{T}=(t_{1},\ldots, t_{n-3}), & p=t_{n-2}, & q=t_{n-1}, & r=t_{n}.
\end{array}
\end{equation*}
Thus our system (\ref{specialsys}) can be written as
\begin{equation*}
 \sigma_{i}(\overline{X},P,Q,R)=\sigma_{i}(\overline{T},p,q,r), \quad \sigma_{j}(\overline{X},P,Q,R)=\sigma_{j}(\overline{T},p,q,r).
\end{equation*}
First of all let us note that if $x_{i}=t_{i}$ for $i=1,\ldots, n-3$ then the above system (as a system of two equations in three variables $P, Q, R$ has a solution $P=p, Q=q, R=r$. We thus put $\overline{X}=\overline{T}$. This immediately implies that the values contain only the variables $t_{i}$ for $i=1,\ldots,n-3$, i.e. those free of $p, q, r, P, Q, R$ are cancel. Now using the expression for $\sigma_{i}(\overline{X},P,Q,R)$ given on the beginning of the proof of Theorem \ref{thm2}, we deduce that our system is equivalent to
\begin{equation}\label{specialsys1}
\begin{cases}
\begin{array}{l}
  u_{1}PQR+v_{1}(PQ+QR+RP)+w_{1}(P+Q+R)=\\
  \hskip 3cm u_{1}pqr+v_{1}(pq+qr+rp)+w_{1}(p+q+r), \\
  u_{2}PQR+v_{2}(PQ+QR+RP)+w_{2}(P+Q+R)=\\
  \hskip 3cm u_{2}pqr+v_{2}(pq+qr+rp)+w_{2}(p+q+r),
\end{array}
\end{cases}
\end{equation}
where in order to shorten the notation we put
\begin{equation*}
\begin{array}{lll}
  u_{1}=\sigma_{i-3}(\overline{T}), & v_{1}=\sigma_{i-2}(\overline{T}), & w_{1}=\sigma_{i-1}(\overline{T}), \\
  u_{2}=\sigma_{j-3}(\overline{T}), & v_{2}=\sigma_{j-2}(\overline{T}), & w_{2}=\sigma_{j-1}(\overline{T}).
\end{array}
\end{equation*}
The full system (\ref{specialsys1}), i.e. this one with $u_{i}v_{i}w_{i}\neq 0$ in $\Q(\overline{T})$ for $i=1,2,3$, arises only in case $3\leq i<j$. We thus consider first the case with $(i,j)=(1,2)$.

\bigskip

\noindent Case $(i,j)=(1,2)$. In this case we have $u_{1}=v_{1}=u_{2}=0$ and $w_{1}=v_{2}=1$. Thus, the system (\ref{specialsys1}) represents a curve of genus 0 with known $\Q(\overline{T})$ rational point and thus can be parameterized by rational functions. The parametrization is given by
\begin{align*}
&P=\frac{pu^2+2(r-q)u+2q+2r-p}{u^2+3},\\
&Q=\frac{qu^2+2(p-r)u+2p-q+2r}{u^2+3},\\
&R=\frac{ru^2+2(q-p)u+2p+2q-r}{u^2+3}.
\end{align*}

Now let us consider the general case $2\leq i<j\leq n$. Let us denote here right-hand sides of the equations from (\ref{specialsys1}) by $A$ and $B$ respectively. Eliminating $P$ from the first equation and putting the obtained expression into the second equation we are left with a quadratic equation in $Q$ of the form $aQ^2+bQ+c=0$, where
\begin{align*}
&a=a(R)=\left(u_1 v_2-u_2 v_1\right)R^2+\left(u_1 w_2-u_2 w_1\right)R+v_1 w_2-v_{2}w_{1},\\
&b=b(R)=\left(u_1 w_2-u_2 w_1\right)R^2+\left(A u_2-B u_1-v_2 w_1+v_1 w_2\right)R+A v_2-B v_1,\\
&c=c(R)=-v_{2}w_{1}R^2+\left(A v_2-B v_1\right)R+A w_2-B w_1.
\end{align*}
We thus see that in order to get solutions of our system it is enough to study the curve
\begin{equation*}
\cal{C}:\;S^2=b(R)^2-4a(R)c(R).
\end{equation*}
The degree of the polynomial $b(R)^2-4a(R)c(R)$ is at most four, which implies that the genus of $\cal{C}$ is at least one. The degree drops to three if and only if $u_1 w_2-u_2 w_1=0$. From the definition of our system we know that the curve $\cal{C}$ contains at least one $\Q(\overline{T},p,q,r)$ rational point, i.e. the point
\begin{equation*}
W=(r,(p-q) \left(r^2 \left(u_2 v_1-u_1 v_2\right)+r \left(u_2 w_1-u_1 w_2\right)+v_2 w_1-v_1 w_2\right).
\end{equation*}
Of course, treating the point $W$ on $\cal{C}$ as a point at infinity and using a standard method we can see that $\cal{C}$ is birationally equivalent to the elliptic curve given by the Weierstrass equation of the form
\begin{equation*}
\cal{E}:\;Y^2=X^3+\cal{A}X+\cal{B},
\end{equation*}
where $\cal{A}, \cal{B}$ are very complicated polynomials dependent on $u_{i}, v_{i}$ for $i=1,2,3$ and variables $p, q, r$. One can prove that the point $\chi (-W)$ (where $\chi:\;\cal{C}\rightarrow \cal{E}$ is the birational map) which lies on $\cal{E}$ is of infinite order. This is checked with the generalization of the Nagell-Lutz theorem which was used at the end of the proof of Theorem \ref{thm1}. We do not present explicit equations and computations because there are not very enlightening. However, let us describe how one can produce the point $[2]W$ (and thus a parametric solution of our problem) without invoking the expression for $\cal{E}$ nor the map $\chi$. In order to do that, we can find a parabola $S=a_{1}R^2+b_{1}R+c_{1}$ (here $c_{1}$ is the $S$-th coordinate of the point $W$) which is tangent to the point $W$ with multiplicity three (or two in the case when $\op{deg}_{R}(b(R)^2-4a(R)c(R))=3$ and then $a_{1}=0$). Then the polynomial equation
\begin{equation*}
(a_{1}R^2+b_{1}R+c_{1})^2=b(R)^2-4a(R)c(R)
\end{equation*}
is of degree four with a triple root at $R=r$. This implies that the fourth root, say $R'$, must be rational and we get $[2]W=(R',a_{1}R'^2+b_{1}R'+c_{1})$. However, even the expression for $R'$ is very complicated and we do not see any reason to present it explicitly.

Let us also note that the cases $(i,j)=(1,j)$ with $j\geq 3$ and $i=2$ are also covered by our reasoning. Indeed, if $(i,j)=(1,j)$ with $j\geq 3$ then we have $u_{1}=v_{1}=0$ and $u_{2}v_{2}w_{2}\neq 0$ (in the case $j=3$ we have $u_{2}=1\neq 0$) and our point $W$ is still of infinite order on $\cal{C}$ (which was checked via the image of $\chi(-W)$ on $\cal{E}$). One can also check that a similar reasoning works for $i=2$ with $u_{1}=0,u_{2}=1$.
\end{proof}

\begin{cor}
Let $n\geq 6$ and $1\leq i<j\leq n$ and let us put $\overline{X}=(x_{1},\ldots,x_{n-3})$. Then for every positive integer $k$, there exist infinitely many primitive sets of $k$ $n$-tuples of polynomials from $\Z[\overline{X}]$ with the same $i$-th elementary symmetric function value and the same $j$-th elementary symmetric function value.
\end{cor}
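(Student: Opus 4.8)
The plan is to run the same normalization argument as in the corollary that follows Theorem~\ref{thm2}, but feeding it with the solutions produced by Theorem~\ref{thm3} instead of those of Theorem~\ref{thm2}. By Theorem~\ref{thm3} the system (\ref{specialsys}) has infinitely many solutions of the shape $(x_{1},\ldots,x_{n-3},P,Q,R)$, where $x_{1}=t_{1},\ldots,x_{n-3}=t_{n-3}$ and the triples $(P,Q,R)$ come either from the multiples $[m]W$ of the infinite-order point $W$ on $\cal{C}$ (the generic case $3\le i<j$, together with the mixed cases treated at the end of that proof) or from the rational parametrization with a free parameter in the case $(i,j)=(1,2)$. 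First I would specialize the three parameters $p,q,r=t_{n-2},t_{n-1},t_{n}$ to suitable rational numbers, so that these triples become rational functions lying in $\Q(\overline{X})=\Q(x_{1},\ldots,x_{n-3})$ and the two common values $\sigma_{i}(\overline{T})$ and $\sigma_{j}(\overline{T})$ become fixed elements of $\Q[\overline{X}]$ shared by every solution in the family.

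Given $k$, I would then select $k$ of these solutions, say the triples $(P_{j},Q_{j},R_{j})$ coming from $[1]W,\ldots,[k]W$, and clear denominators exactly as in the earlier corollary: write $P_{j}=p_{j}/d$, $Q_{j}=q_{j}/d$, $R_{j}=r_{j}/d$ with $p_{j},q_{j},r_{j}\in\Z[\overline{X}]$ and $\op{gcd}(\{\op{gcd}(p_{j},q_{j},r_{j}):\ j=1,\ldots,k\},d)=1$, and multiply every coordinate of each $n$-tuple by $d$. Since scaling all $n$ coordinates by $d$ multiplies $\sigma_{i}$ by $d^{i}$ and $\sigma_{j}$ by $d^{j}$, the resulting tuples $(x_{1}d,\ldots,x_{n-3}d,p_{j},q_{j},r_{j})$ all share the $i$-th elementary symmetric value $\sigma_{i}(\overline{T})d^{i}$ and the $j$-th elementary symmetric value $\sigma_{j}(\overline{T})d^{j}$, hence form one primitive set of $k$ $n$-tuples over $\Z[\overline{X}]$. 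Repeating this for the windows $[m]W,\ldots,[m+k-1]W$ as $m$ ranges over the positive integers, and arguing as before that a coincidence of two such sets forces $d=d'$ together with equality of the underlying tuples, produces infinitely many distinct primitive sets.

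The single new ingredient, and the step I expect to be the main obstacle, is the specialization of $p,q,r$. In the corollary after Theorem~\ref{thm2} the solutions already lived in $\Q(\overline{X})$ because $a$ and $b$ were fixed integers, whereas Theorem~\ref{thm3} only places them in $\Q(t_{1},\ldots,t_{n})=\Q(\overline{X},p,q,r)$, so I must guarantee that after fixing $p,q,r$ the chosen solutions stay distinct and infinite in number. The clean way is to invoke N\'eron's specialization theorem (in Silverman's formulation) for the family over $\Q(\overline{X})$ parametrized by $(p,q,r)$: the specialization homomorphism is injective for all but a thin set of rational values, so for a generic choice the image of $W$ stays of infinite order and the $[m]W$ specialize to pairwise distinct solutions. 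Alternatively one can simply repeat over $\Q(\overline{X})$, for a convenient integer choice of $p,q,r$, the Nagell--Lutz computation already carried out in Theorem~\ref{thm3}. The hypothesis $n\ge 6$ serves to keep enough free variables in $\overline{X}$ for the specialized curve to remain genuinely elliptic and for the denominator normalization above to go through; once the specialization is fixed, all remaining steps are the bookkeeping already done in the preceding corollary.
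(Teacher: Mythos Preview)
The paper states this corollary without proof, leaving it as an immediate consequence of Theorem~\ref{thm3}; so there is no detailed argument to compare against. Your proposal is correct and follows exactly the implicit route: feed the normalization argument from the corollary after Theorem~\ref{thm2} with the infinite family of solutions provided by Theorem~\ref{thm3}, then clear denominators and scale.

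You are also right to flag the one genuinely new point, which the paper passes over in silence: the triples $(P,Q,R)$ produced by Theorem~\ref{thm3} lie in $\Q(\overline{T},p,q,r)$, not in $\Q(\overline{X})$, so a specialization of $p,q,r$ to rational values is required before the denominator-clearing step can yield tuples in $\Z[\overline{X}]$. Either of your suggested mechanisms (invoke Silverman's specialization theorem, or redo the Nagell--Lutz check over $\Q(\overline{X})$ for a concrete integer choice of $p,q,r$) closes this gap cleanly. Your reading of the hypothesis $n\ge 6$ as guaranteeing that $\overline{X}$ retains at least three free variables, so that the specialized $\cal{C}$ remains a genuine elliptic curve over a function field, is the natural explanation for that bound and matches the role $\overline{T}$ plays in the coefficients $u_{1},v_{1},w_{1},u_{2},v_{2},w_{2}$ of Theorem~\ref{thm3}.
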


We present now two examples of the method employed to prove Theorem \ref{thm3} in action. In the first one the curve $\cal{C}$ is cubic (which is very convenient). In the second one the polynomial on the right-hand side of the equation defining $\cal{C}$ is genuine a quartic.

\begin{exam}\label{exam:cubic}
{\rm
 Let us consider the system (\ref{mainsys3}) with $i=3, j=4, n=4$, i.e. we are interested in quadruplets of rational numbers with the same third symmetric function and the same product. This system was not solved before. We put $x_{1}=t$ and as in the proof of the Theorem \ref{thm3} $x_{2}=P, x_{3}=Q, x_{4}=R$. In this case the quartic reduces to a cubic and $\cal{C}$ takes a particularly simple form:
\begin{equation*}
\cal{C}:\;S^2=(pqr-(pq+pr+qr)R)^2-4pqrR^3,
\end{equation*}
with the point $W=(r,(p-q)r^2)$. The point $[2]W$ is given by
\begin{equation*}
[2]W=\Big(\frac{p q (p-r) (r-q)}{r (p-q)^2},\frac{pq(p^2q^2(p+q)-6p^2q^2r+3pq(p+q)r^2-(p^2+q^2)r^3)}{r(p-q)^3}\Big)
\end{equation*}
and the solution of our system is of the form
\begin{equation*}
P=\frac{pr(p-q)(q-r)}{q(p-r)^2},\quad Q=\frac{qr(q-p)(p-r)}{p(q-r)^2},\quad R=\frac{pq(p-r)(r-q)}{r(p-q)^2}.
\end{equation*}
Computing the expressions for $[k]W,\;k=3,4,\ldots$ we get infinitely many parametric solutions of our system. Let us also note that $W'=(0,pqr)$ lies on the curve $\cal{C}$. However, it cannot be used in order to find parametric solutions of our system because it is of finite order.
}
\end{exam}

\begin{exam}\label{exam:quartic}
{\rm Let us consider the system (\ref{mainsys3}) with $i=3, j=4, n=5$. In order to shorten the notation we put $x_{1}=x_{2}=1, x_{3}=P, x_{4}=Q, x_{5}=R$. Next, we put $a=\sigma_{3}(1,1,1,2,r)$ and $b=\sigma_{4}(1,1,1,2,r)$. The curve $\cal{C}$ takes the form
\begin{equation*}
\cal{C}:\;S^2=(2R^2-11(r+1)R+5r-3)^2-4(R^2+(5r-3)R+7r+2)(3R^2+2R+1)
\end{equation*}
and the point $W$ is given by $W=(r, 1+2r+3r^2)$. The point $R$-th coordinate of the point $[2]W$ is given by
\begin{equation*}
R=\frac{102 r^5-525 r^4+1138 r^3-1164 r^2+456 r+65}{171 r^4-740 r^3+1356 r^2-1200 r+449}
\end{equation*}
and leads us to the expressions for $P, Q$ of the form:
\begin{equation*}
P=\frac{11 r^4-43 r^3+79 r^2-73 r+32}{17 r^4-88 r^3+188 r^2-192 r+81},\quad Q=\frac{29 r^4-138 r^3+271 r^2-254
   r+98}{6 r^4-28 r^3+61 r^2-66 r+33}.
\end{equation*}
Summing up, we see that the quintiple  $(x_{1},x_{2}, x_{3},x_{4},x_{5})=(1,1,P,Q,R)$ solves the system (\ref{mainsys3}) with $i=3, j=4, n=5$ with $a=\sigma_{3}(1,1,1,2,r)$ and $b=\sigma_{4}(1,1,1,2,r)$. Computation of $[m]W$ for $m=3, 4,\ldots$ give us infinitely many solutions.

}
\end{exam}

We end this section with some remarks concerning a more general system than the one considered previously. Let $n\geq 3$ be a given positive integer and for $k\leq n$ let us define the following subset of $\N^{k}$:
\begin{equation*}
I_{k}=\{(i_{1},i_{2},\ldots, i_{k})\in\N^{k}:\;1\leq i_{1}<i_{2}<\ldots <i_{k}\leq n\}
\end{equation*}
and for $\iota\in I_{k}$ let us put $X_{\iota}=x_{i_{1}}x_{i_{2}}\ldots x_{i_{k}}$. Using this notation we can consider the system of the form
\begin{equation}\label{generalsys}
\sum_{k=1}^{n}\sum_{\iota\in I_{k}}a_{\iota}X_{\iota}=a,\quad \sum_{k=1}^{n}\sum_{\iota\in I_{k}}b_{\iota}X_{\iota}=b,
\end{equation}
where $a, b, a_{\iota}, b_{\iota}$ are given rational numbers. It is clear that the systems considered in the previous sections are very special cases of this general one. One can thus ask, what can be proved about the solvability of (\ref{generalsys})? We expect that for all choices of $a_{\iota}, b_{\iota}$ there should exist $a, b$ such that the above system has infinitely many solutions. In fact this can be proved using exactly the same type of reasoning which was used in the proof of Theorem \ref{thm3}. We do not give all the details, but let us sketch the main lines of the proof. First of all, let us note that if the equations from (\ref{generalsys}) are independent, i.e. they contain different sets of variables, there is nothing to prove. In fact, in order to have something nontrivial to solve we can assume that at least three common variables appear in both equations, say $P, Q, R$. Let $\overline{T}$ denote the set of remaining variables. Let us note that in each variable the degree of both equations for (\ref{generalsys}) is equal to one. Eliminating now $P$ from the first equation and putting into the second equation we get a quadric polynomial, say $F$, in the variable $Q$. Of course it may happen that $F$ reduces to a linear function in $Q$, but then we just solve for the linear equation $F(Q)=0$ and get solutions of our system. We thus can assume that
$\op{deg}_{Q}F=2$. Let us write $F(Q)=A_{2}Q^2+A_{1}Q+A_{0}$. The coefficients of $F$ are polynomials in the variable $R$ with coefficients dependent on the set of variables $\overline{T}$. However, the most important thing is that the degree of $A_{i}$ for $i=0,1,2$ is at most two. We know that $F(Q)=0$ has rational roots if and only if $A_{1}(R)^2-4A_{2}(R)A_{0}(R)$ is a square. We thus meet with the problem of finding rational points on the curve
\begin{equation*}
\cal{C}:\;S^2=A_{1}(R)^2-4A_{2}(R)A_{0}(R).
\end{equation*}
The genus of the curve $\cal{C}$ is at most one. Having a rational point $U$ of infinite order on $\cal{C}$ guarantees the existence of infinitely many rational solutions of our initial system (\ref{generalsys}). In order to guarantee the existence of a rational point on $\cal{C}$ one can use the method which was exploited in the proof of Theorem \ref{thm3}, i.e. taking $a, b$ to be values of polynomials on the left with different set of variables.

It is very unlikely that (\ref{generalsys}) has parametric solutions for all choices of $a_{\iota}, b_{\iota}\in\Q$ and rational parameters $a, b$. However, the results presented in Theorem \ref{thm1} and Theorem \ref{thm2} show that sometimes we have solutions for generic parameters $a, b$. This leads us to the following:

\begin{ques}
Let $n\geq 3 $ be given. What conditions on $a_{\iota}, b_{\iota}$ and $n$ guarantee that the system {\rm (\ref{generalsys})} with given rational parameters $a, b$ has infinitely many rational parametric solutions?
\end{ques}

\section{Some remarks on related systems of Diophantine equations}\label{Section5}

The referee of the paper \cite{ZC2} asked the authors whether there exist infinitely many $n$-tuples of positive integers with
the same sum, the same product, and the same value of the second elementary symmetric polynomial for $n\geq 4$. Although we could not solve this problem, we are able to prove the following result.

\begin{thm}
There are infinitely many triples $a,b,c$ of rational numbers with $c\neq 0$ such that the system
\begin{equation}\label{finitesys}
\sigma_{i_{1}}(P,Q,R,S)=a,\quad \sigma_{i_{2}}(P,Q,R,S)=b,\quad \sigma_{i_{3}}(P,Q,R,S)=c
\end{equation}
with $(i_{1},i_{2},i_{3})=(1,2,3), (1,3,4)$ has infinitely many solutions in rational numbers.
\end{thm}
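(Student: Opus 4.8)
The plan is to handle both cases by producing, for a suitable two-parameter family of triples $(a,b,c)$, an explicit one-parameter family of solutions, exactly in the spirit of Theorem \ref{thm3}. Since three conditions are now imposed on the four unknowns $P,Q,R,S$, the solution set is a curve, so infinitely many rational points are not automatic; this is why the theorem only asserts the existence of infinitely many admissible triples. The key idea is to impose a symmetry on the sought solutions which is compatible with fixing $\sigma_{i_1},\sigma_{i_2},\sigma_{i_3}$, thereby cutting the curve down to a genus $\leq 1$ locus on which we can exhibit infinitely many points.

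For $(i_1,i_2,i_3)=(1,2,3)$ I would look for solutions split into two pairs of equal sum, $P+Q=R+S=a/2$, and write $PQ=n_1$, $RS=n_2$. Then $\sigma_3=c$ becomes $(a/2)(n_1+n_2)=c$, forcing $n_1+n_2=2c/a$, while $\sigma_2=b$ becomes the single relation $b=a^2/4+2c/a$ on the target values; the product $\sigma_4=n_1n_2$ remains free. The entries $P,Q$ and $R,S$ are rational precisely when $a^2/4-4n_1$ and $a^2/4-4n_2$ are squares $u^2,v^2$, and since $n_1+n_2$ is fixed these add up to the conic
\[
u^2+v^2=\frac{a^2}{2}-\frac{8c}{a}.
\]
For every nonzero $a$ and every $c\neq 0$ making the right-hand side a nonzero sum of two rational squares, this conic of genus zero has a rational point and hence infinitely many; since $\sigma_4=n_1n_2$ then runs through infinitely many values, we obtain infinitely many distinct quadruples for each such triple $(a,\,a^2/4+2c/a,\,c)$, and infinitely many admissible triples as $a$ and $c$ vary. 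This case therefore needs no elliptic curve at all.

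For $(i_1,i_2,i_3)=(1,3,4)$ the analogous pairing collapses the free parameter (fixing both $\sigma_3$ and $\sigma_4$ fixes the sum and the product of $n_1,n_2$), so I would instead impose the reciprocal symmetry
\[
\{P,Q,R,S\}=\{\mu,\ \lambda/\mu,\ \nu,\ \lambda/\nu\}.
\]
A direct computation shows that this forces $\sigma_3=\lambda\sigma_1$ and $\sigma_4=\lambda^2$, so the construction produces exactly the triples $(a,b,c)=(a,\lambda a,\lambda^2)$ with $c=\lambda^2\neq 0$; the only remaining equation $\sigma_1=a$ reads $(\mu+\nu)(\mu\nu+\lambda)=a\mu\nu$. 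Setting $p=\mu\nu$ gives $\mu+\nu=ap/(p+\lambda)$, and the rationality of $\mu,\nu$ is equivalent to $(\mu+\nu)^2-4\mu\nu$ being a square; clearing the square factor $(p+\lambda)^2$ this is the elliptic curve
\[
\cal{E}_{\lambda}:\ S^2=p\big(-4p^2+(a^2-8\lambda)p-4\lambda^2\big),
\]
whose discriminant is nonzero as an element of $\Q(a,\lambda)$, so $\cal{E}_{\lambda}$ is smooth. As in Theorem \ref{thm3} I would fix rational $\mu_0,\nu_0$, set $(a,b,c)$ to be the values of $\sigma_1,\sigma_3,\sigma_4$ at $(\mu_0,\lambda/\mu_0,\nu_0,\lambda/\nu_0)$, so that $p_0=\mu_0\nu_0$ gives a rational point $W$ on $\cal{E}_{\lambda}$, and then show that $W$ has infinite order.

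The main obstacle is precisely this last step. To prove $W$ non-torsion I would treat $\mu_0,\nu_0,\lambda$ as independent parameters and apply the generalization of the Nagell-Lutz theorem used at the end of the proofs of Theorems \ref{thm1} and \ref{thm2}, i.e. exhibit a specialization at which the corresponding integral point $(X,Y)$ satisfies $Y^2\nmid\Delta(\cal{E}_{\lambda})$; granting this, the multiples $[k]W$ produce infinitely many values of $p$, hence infinitely many rational solutions for each such triple, and the specialization theorem then yields infinitely many $(\mu_0,\nu_0,\lambda)$, hence infinitely many admissible triples $(a,\lambda a,\lambda^2)$. Besides this computation, the remaining points to check are routine: that distinct values of $p$ give genuinely distinct quadruples (for instance through the varying value of $\sigma_2$), that $\cal{E}_{\lambda}$ does not degenerate for the chosen parameters, and that $c=\lambda^2\neq 0$.
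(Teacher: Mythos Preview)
Your approach is correct and takes a genuinely different route from the paper, though it is striking that both routes land on the same families of triples. The paper proceeds mechanically: for each index triple it computes a Gr\"obner basis to isolate a single plane curve in $(R,S)$, then forces that curve to acquire a singularity by constraining $(a,b,c)$ (via the discriminant ideal), and finally parametrizes or finds a point of infinite order on the resulting genus~$\leq 1$ component. You instead impose an \emph{a priori} symmetry on the quadruple --- equal pair-sums $P+Q=R+S=a/2$ for $(1,2,3)$, reciprocal pairs $\{\mu,\lambda/\mu,\nu,\lambda/\nu\}$ for $(1,3,4)$ --- which collapses one of the three equations automatically and reduces the problem directly to a conic, respectively a cubic. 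The payoff is that your $(1,2,3)$ argument is completely elementary (no Gr\"obner bases, no elliptic curves at all), and your $(1,3,4)$ curve $\cal{E}_{\lambda}$ is already a cubic rather than the quartic $C_{2}$ the paper must first transform to Weierstrass form. What the paper's approach buys is uniformity: the singularity search is systematic and would adapt to other index triples, whereas your symmetries are hand-crafted. Note that your constraints $b=a^{2}/4+2c/a$ and $c=(b/a)^{2}$ coincide with the ones the paper extracts from $\op{GB}(I')\cap\Z[a,b,c]$, and indeed the paper's explicit $(1,2,3)$ solutions satisfy your pair-sum symmetry (with the pairing $(P,R),(Q,S)$), so the two arguments are parametrizing the same loci from different directions. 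The one step you flag but do not carry out --- infinite order of the tautological point on $\cal{E}_{\lambda}$ --- is genuinely required; the paper handles the analogous verification by specializing $b$ further in terms of an auxiliary parameter $d$ so as to produce an explicit second rational point, and you could do the same if the direct Nagell--Lutz check over $\Q(\mu_{0},\nu_{0},\lambda)$ proves unwieldy.
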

\begin{proof}
We start with $(i_{1},i_{2},i_{3})=(1,2,3)$ and consider the system (\ref{finitesys}). We compute the Gr\"{o}bner basis of the ideal, say $I_{1}$, containing the equations defining (\ref{finitesys}) with the order $P<Q<R<S$. We get $\op{GB}(I_{1})=\{\sigma_{1}(P,Q,R,S)-a, F_{1}(Q,R,S),F_{2}(R,S)\}$,
where
\begin{align*}
F_{1}(Q,R,S)&=\Big(Q + \frac{1}{2}(R+S-a)\Big)^2-\frac{a^2}{4}-\frac{1}{2}a(R+S)+b+\frac{3}{4}(R+S)^2-RS,\\
  F_{2}(R,S)&=-c + a R S + b (R + S) - 2 R S (R + S) - a (R + S)^2 + (R + S)^3.
\end{align*}
We thus see that each quadruple $(P,Q,R,S)$ satisfying (\ref{finitesys}) gives us a point $(R,S)$ on the curve $C_{1}:\;F_{2}(R,S)=0$ and this point yields the point $(Q,R,S)$ on the surface $S_{1}:\;F_{1}(Q,R,S)=0$. Because we are interested in finding $a, b, c$ such that (\ref{finitesys}) has infinitely many solutions we want to find instances of $a, b, c$ such that the curve $C_{1}$ is singular. In order to do so, we compute the Gr\"{o}bner basis of the ideal $I_{1}'=<F_{2}(R,S),\partial_{R}F_{2}(R,S),\partial_{S}F_{2}(R,S)>$ with respect to the order $R<S<a<b<c$. Next, we observe that
\begin{equation*}
\op{GB}(I_{1}')\cap\Z[a,b,c]=\{(a^3-4 a b+8 c) (-9 a^2 b^2+32 b^3+27 a^3 c-108 a b c+108 c^2)\}.
\end{equation*}
We thus put $c=\frac{a(a^2-4b)}{8}$. After this substitution we notice that the curve $C_{1}$ has two singular points
\begin{equation*}
(R,S)=\Big(\frac{1}{4}(a\pm\sqrt{3a^2-8b}),\frac{1}{4}(a\mp\sqrt{3a^2-8b})\Big).
\end{equation*}
In order to make these points rational we substitute $b=\frac{3a^2-d^2}{8}$, where $d$ is rational parameter. This leads us to
\begin{equation*}
b=\frac{1}{8}(3a^2 - d^2),\quad c=\frac{1}{16}a(a^2 - d^2).
\end{equation*}
We note that this choice leads to
\begin{equation*}
F_{2}(R,S)=(2R+2S-a)(a^2 - d^2 - 4 aR + 8R^2 - 4 aS+ 8S^2).
\end{equation*}
The condition $2R+2S-a=0$ leads to trivial solutions. However, the equation  $a^2 - d^2 - 4 aR + 8R^2 - 4 aS+ 8S^2=0$ defines a genus 0 curve, say $\cal{C}$, with a rational point $(R,S)=\Big(\frac{a-d}{4},\frac{a+d}{4})$ and thus it can be parameterized by rational functions. The parametrization has the following form:
\begin{equation*}
R=\frac{a + d - 2 d t + (a - d) t^2}{4(1+t^2)},\quad S=\frac{a+d+2dt+(a-d)t^2}{4(1+t^2)}.
\end{equation*}
With the $R, S$ given above we easily solve the system (\ref{finitesys}) getting
\begin{equation*}
P=\frac{a - d + 2 d t + (a + d) t^2}{4(1+t^2)},\quad Q=\frac{a - d - 2 d t + (a + d) t^2}{4(1+t^2)}.
\end{equation*}
We thus see that for each rational $t$ and $a, d$ with $a(a^2-d^2)\neq 0$ we get infinitely many non-trivial solutions of the system (\ref{finitesys}).

\bigskip

In order to tackle the system (\ref{finitesys}) with $(i_{1},i_{2},i_{3})=(1,3,4)$ we use the same approach as in the case $(i_{1},i_{2},i_{3})=(1,2,3)$.
We compute the Gr\"{o}bner basis of the ideal, say $I_{2}$, containing the equations defining (\ref{finitesys}) with the order $P<Q<R<S$ and note that $\op{GB}(I_{2})\cap \Z[R,S]=\{G(R,S)\}$, where
\begin{equation*}
G(R,S)=b R S - a R^2 S^2 - c (R + S) + R^2 S^2 (R + S).
\end{equation*}
Computation of the Gr\"{o}bner basis of $I_{2}'=<G(R,S),\partial_{R}G(R,S),\partial_{S}G(R,S)>$ leads us to the intersection
\begin{equation*}
\op{GB}(I_{2}')\cap\Z[a,b,c]=\{c^2 (-b^2+a^2 c) (-a^3 b^3+27 b^4-6 a^2 b^2 c+27 a^4 c^2-768 a b c^2+4096 c^3)\}.
\end{equation*}
We thus put $c=\frac{b^2}{a^2}$ and then we get
\begin{equation*}
G(R,S)=(b-a R S)(-bR-bS+a^2RS-aR^2S-aRS^2)
\end{equation*}
and thus we put
\begin{equation}\label{S}
S=\frac{b}{aR}
\end{equation}
 with $a\neq 0$. Solving now the first equation from the system (\ref{finitesys}) with respect to $P$ we get
 \begin{equation}\label{P}
 P=\frac{-b+(a^2-aq)R-aR^2}{aR}.
 \end{equation}
Putting the expressions for $P, S$ into the second and third equation from the system (\ref{finitesys}) we get
\begin{align*}
&\sigma_{3}(P,Q,R,S)-a=-\frac{(a R^2+b)(bQ+(a Q^2-a^2Q+b)R+aQR^2)}{a^2 R^2},\\ &\sigma_{4}(P,Q,R,S)-c=\frac{b(bQ+(a Q^2-a^2Q+b)R+aQR^2)}{a^2R}.
\end{align*}
We are left with solving the equation $bQ+(a Q^2-a^2Q+b)R+aQR^2=0$ or equivalently the equation defining the curve $C_{2}$ given by
\begin{equation*}
C_{2}:\;V^2=(aR^2-a^2R+b)^2-4 abR^2,
\end{equation*}
where we put $V=2aQR+b-a^2R+aR^2$. In order to guarantee that $C_{2}$ will have infinitely many rational points we put
\begin{equation*}
b=-\frac{(a^4 - d^2)^2}{16 a d^2},
\end{equation*}
where $d$ is a parameter. Then on $C_{2}$ we have two points:
\begin{equation*}
U_{1}=\Big(0,-\frac{(a^4 - d^2)^2}{16 a d^2}\Big)\quad U_{2}=\Big(-\frac{\left(a^2-t\right)^2}{4 a t},\frac{\left(a^2-t\right)^3 \left(a^2+t\right)}{8 a t^2}\Big).
\end{equation*}
Using the point $U_{1}$ as the point at infinity we get that $C_{2}$ is birationally equivalent with the curve in the Weierstrass form $E_{2}:\;Y^2=X^3+AX+B$, where
\begin{align*}
&A=-27 (a^{16} + 12 a^{12} d^2 - 10 a^8 d^4 + 12 a^4 d^6 + d^8),\\
&B=-54 (a^8 + d^4) (a^{16} - 36 a^{12} d^2 + 38 a^8 d^4 - 36 a^4 d^6 + d^8).
\end{align*}
We can look on $E_{2}$ as on the elliptic curve defined over $\Q(a,d)$. Now we have $\Delta (E_{2})=-2^{4}3^{12}a^4 (a^2 - d)^8 d^2 (a^2 + d)^8 (a^8 - a^4 d^2 + d^4)$.  Moreover, the point $U_{2}$ lying on $C_{2}$ corresponds to the point
\begin{equation*}
\phi(U_{2})=U=(-3 (a^8 + 6 a^6 d - 6 a^2 d^3 - 5 d^4), 54d^2(a^2 - d)(a^2 + d)^3),
\end{equation*}
where $\phi:\;C_{2}\rightarrow E_{2}$ is a birational map. Because the square of the $Y$-th coordinate of $V$ does not divide $\Delta (E_{2})$, we immediately deduce that $V$ is of infinite order on $E_{2}$. We thus see that in order to find infinitely many solutions of the system (\ref{finitesys}) with chosen $a, b, c$ we first compute $[k]U$ for $k=1,2,\ldots $; next we compute the corresponding $\phi^{-1}([k]U)=(R_{k}, V_{k})$ lying on $C_{2}$. Then from the expressions for $P, S$ and $V$ we get
\begin{align*}
&Q_{k}=\frac{a^8-2 a^4 d^2+16 a^3 d^2 R_{k}-16 a^2 d^2 R_{k}^2+16 a d^2 V_{k}+d^4}{32 a^2 d^2 R_{k}},\\
&P_{k}=\frac{-b+(a^2-aQ_{k})R_{k}-aR_{k}^2}{aR_{k}},\\
&S_{k}=-\frac{(a^4 - d^2)^2}{16 a d^2aR_{k}}.
\end{align*}
We thus obtain the solutions of the system (\ref{finitesys}) for $k=1,2,\ldots.$ Our theorem is proved.

\end{proof}

The well-known interpretation of symmetric polynomials as coefficients in the expansion of the polynomial $\prod_{i=1}^{n}(X+x_{i})$ implies the following:

\begin{cor}\label{cor:quartic}
There is an infinite set $\cal{A}_{i_{1},i_{2},i_{3}}$ for $(i_{1},i_{2},i_{3})=(1,2,3), (1,3,4)$ of triples of rational numbers $(a,b,c)$ such that for any $(a,b,c)\in \cal{A}_{i_{1},i_{2},i_{3}}$ there exists infinitely many rational numbers $t$ such that the polynomial
$X^4+aX^3+bX^2+cX+t$ (in case of $(i_{1},i_{2},i_{3})=(1,2,3)$) has only rational roots. Similar property holds for the polynomials of the form $X^4+aX^3+tX^2+bX+c$ in case of $(i_{1},i_{2},i_{3})=(1,3,4)$.
\end{cor}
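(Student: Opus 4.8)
The plan is to deduce the statement directly from the preceding theorem by means of the coefficient interpretation of the elementary symmetric polynomials. For any four rational numbers $x_1,x_2,x_3,x_4$ one has
\begin{equation*}
\prod_{i=1}^{4}(X+x_i)=X^4+\sigma_1 X^3+\sigma_2 X^2+\sigma_3 X+\sigma_4,
\end{equation*}
where $\sigma_k=\sigma_k(x_1,x_2,x_3,x_4)$. Thus a monic rational quartic splits completely over $\Q$ exactly when its coefficients are simultaneously realized as the values $\sigma_1,\sigma_2,\sigma_3,\sigma_4$ of some rational quadruple. The whole idea is therefore to match the one coefficient left free in the quartic with the single symmetric function that is \emph{not} prescribed by the system (\ref{finitesys}), and to let the previous theorem supply the quadruples.

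For $(i_1,i_2,i_3)=(1,2,3)$ I would take $\cal{A}_{1,2,3}$ to be the infinite family of triples $(a,b,c)$ with $c\neq 0$ produced in the proof of that theorem. For a fixed such triple the theorem gives infinitely many rational quadruples $(P,Q,R,S)$ with $\sigma_1=a$, $\sigma_2=b$, $\sigma_3=c$; setting $t:=\sigma_4(P,Q,R,S)=PQRS$ and applying the displayed identity with $(x_1,x_2,x_3,x_4)=(P,Q,R,S)$ shows that $X^4+aX^3+bX^2+cX+t$ equals $\prod_{i=1}^{4}(X+x_i)$ and hence has only rational roots. It remains to see that infinitely many distinct $t$ arise. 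The solutions here form a one-parameter rational family $P,Q,R,S\in\Q(s)$ (the parameter called $t$ in the theorem, here renamed $s$ to avoid a clash), so $PQRS$ is a fixed element of $\Q(s)$; one checks it is non-constant by evaluating at two values of $s$, and therefore it takes infinitely many values as $s$ ranges over $\Q$.

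The case $(i_1,i_2,i_3)=(1,3,4)$ is identical in spirit, the only difference being the position of the free coefficient. Now the theorem fixes $\sigma_1=a$, $\sigma_3=b$, $\sigma_4=c$, so the coefficient interpretation identifies the free quantity with $\sigma_2$, i.e.\ with the coefficient of $X^2$ in $X^4+aX^3+tX^2+bX+c$. I would let $\cal{A}_{1,3,4}$ be the corresponding infinite family of triples $(a,b,c)$ from the theorem; each gives infinitely many rational quadruples $(P,Q,R,S)$ arising from the multiples $[k]U$ of the infinite-order point $U$ on the elliptic curve $E_2$, and putting $t:=\sigma_2(P,Q,R,S)$ makes the quartic split over $\Q$. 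The only point needing verification, and the sole real obstacle, is again the non-constancy of $t$ along the family: since $U$ has infinite order, the abscissae $R_k$, and hence $P_k,Q_k,S_k$ and finally $\sigma_2$, are genuinely different for distinct $k$, so $t$ assumes infinitely many values. Beyond this, the corollary is a direct transcription of the theorem and requires no new computation.
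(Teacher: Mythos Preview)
Your proposal is correct and follows exactly the paper's approach: the corollary is stated there as an immediate consequence of the identity $\prod_{i=1}^{4}(X+x_i)=\sum_{k}\sigma_k X^{4-k}$, with no further argument given. You are in fact more careful than the paper in worrying about whether infinitely many distinct values of $t$ occur; your justification in the $(1,3,4)$ case is slightly loose (distinct $R_k$ do not \emph{a priori} force distinct $\sigma_2$), but it is easily repaired by observing that if $\sigma_2$ were constant then all quadruples would be permutations of the roots of a single quartic, contradicting the infinitude of solutions.
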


We end with the following observation.

\begin{thm}\label{thm5}
Let $n\geq 5$. Then there exist infinitely many triples of rational numbers $a,b,c$ such that
the system of Diophantine equations
\begin{equation}\label{threesystem}
\sigma_{1}(x_{1},\ldots, x_{n})=a,\quad \sigma_{2}(x_{1},\ldots, x_{n})=b,\quad \sigma_{3}(x_{1},\ldots, x_{n})=c
\end{equation}
has infinitely many solutions in rational numbers $(x_{1},\ldots,x_{n})$.
\end{thm}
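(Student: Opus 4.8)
The plan is to reduce the three-equation system \eqref{threesystem} to the two-equation situation already handled in Theorem \ref{thm3}, by fixing enough variables so that one of the three symmetric conditions becomes automatic. Since $n\geq 5$, I would set $x_{4}=x_{5}=\dots=x_{n-3}=t_{i}$ to be free parameters (or specialize them to convenient constants) and isolate a small block of the variables, say $x_{1}=P,\ x_{2}=Q,\ x_{3}=R$, on which the real work is done. The idea is that by choosing the target values $a,b,c$ to be the actual values $\sigma_{1}(\overline{t}),\sigma_{2}(\overline{t}),\sigma_{3}(\overline{t})$ of the symmetric polynomials at a generic point $\overline{t}=(t_{1},\dots,t_{n})$, the system automatically has the trivial solution $x_{i}=t_{i}$, which guarantees a starting rational point on whatever curve appears. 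This is exactly the device used to produce the point $W$ in the proof of Theorem \ref{thm3}.

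The key steps, in order, are as follows. First I would use the expansion of $\sigma_{k}(\overline{X},P,Q,R)$ in terms of $\sigma_{\ell}(\overline{X})$ and the elementary symmetric functions of $P,Q,R$, exactly as at the start of the proof of Theorem \ref{thm2}. Writing $e_{1}=P+Q+R$, $e_{2}=PQ+QR+RP$, $e_{3}=PQR$, the three equations in \eqref{threesystem} become three affine-linear relations in $(e_{1},e_{2},e_{3})$ with coefficients in $\Q(\overline{t})$. Second, because we have three linear equations in the three unknowns $e_{1},e_{2},e_{3}$, generically we can solve them and recover prescribed values $e_{1}=\alpha$, $e_{2}=\beta$, $e_{3}=\gamma$ in $\Q(\overline{t})$. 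Third, the problem then becomes: find infinitely many rational triples $(P,Q,R)$ with $P+Q+R=\alpha$, $PQ+QR+RP=\beta$, $PQR=\gamma$. But these three conditions say precisely that $P,Q,R$ are the roots of the fixed cubic $Z^3-\alpha Z^2+\beta Z-\gamma$, so generically there is only a finite set of solutions — this is the obstacle, and it forces a less rigid choice.

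The fix, which I expect to be the crux, is not to prescribe all three of $a,b,c$ independently but to let one of them vary, thereby gaining a genuine one-parameter family. Concretely, I would fix $x_{6},\dots,x_{n}$ as parameters and treat $(i,j)=(1,2)$ among the variables $x_{1},\dots,x_{5}$ using the genus-$0$ parametrization already exhibited in the case $(i,j)=(1,2)$ of Theorem \ref{thm3}; this gives a rational one-parameter family satisfying the $\sigma_{1}$ and $\sigma_{2}$ constraints, and then I would define $c$ to be the resulting value of $\sigma_{3}$ along that family. The point is that $\sigma_{3}$, evaluated on the parametrized solutions, is a single rational function of the free parameter $u$; for a generic fixed value of $u$ (equivalently, a generic choice of the triple $(a,b,c)$ lying on the image curve) the fibre is still infinite because the $n-5$ trailing variables $x_{6},\dots,x_{n}$, together with the residual freedom in the genus-$0$ parametrization, remain free. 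Thus for each admissible $(a,b,c)$ in this infinite set one gets infinitely many rational solutions. The main difficulty is bookkeeping: one must verify that pinning down $\sigma_{3}=c$ does not over-determine the genus-$0$ family, i.e. that the $\sigma_{3}$-level sets inside the two-parameter solution surface are still positive-dimensional, which amounts to checking that $\sigma_{3}$ is non-constant but that its generic fibre in the parametrized family has dimension at least one. Because $n\geq 5$ supplies at least two independent free directions (the parameter $u$ and at least one trailing $x_{i}$), this dimension count goes through, and letting $(a,b,c)$ range over the resulting image produces the required infinite set of triples.
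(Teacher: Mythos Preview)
Your dimension count does not close, and the gap is already visible in the base case $n=5$. There the ``trailing block'' $x_{6},\dots,x_{n}$ is empty, so the only free parameter in your construction is the genus-$0$ parameter $u$ coming from the $(i,j)=(1,2)$ case of Theorem~\ref{thm3}. Along this $u$-family one has $\sigma_{1}=a$ and $\sigma_{2}=b$ fixed, but $\sigma_{3}$ is a non-constant rational function of $u$ (for the parametrisation displayed in the proof of Theorem~\ref{thm3}, $P+Q+R$ and $PQ+QR+RP$ are constant while $PQR$ moves; e.g.\ for $(p,q,r)=(1,2,4)$ one gets $PQR=8$ at $u=\infty$ and $PQR=176/27$ at $u=0$). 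Hence for a generic value $c$ the equation $\sigma_{3}=c$ cuts the $u$-line in finitely many points, and you obtain only finitely many solutions for that $(a,b,c)$. For $n\geq 6$ your proposed extra freedom in $x_{6},\dots,x_{n}$ does not help either: those coordinates enter $a,b,c$ themselves, so moving them moves the target triple, and they cannot be varied inside a fixed fibre. In short, your construction produces a curve in $(a,b,c)$-space together with a section over it, not a positive-dimensional family over any single point.

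The paper avoids this by a different ansatz that kills the $\sigma_{3}$-equation algebraically rather than by a fibre-dimension argument. One takes
\[
x_{1}=P,\quad x_{2}=Q,\quad x_{3}=t_{1}+t_{2}-Q,\quad x_{4}=t_{1}+t_{2}-P,\quad x_{i}=t_{i-2}\ (i\geq 5),
\]
so that $\sigma_{1}$ is automatically equal to its target value for \emph{all} $(P,Q)$. The point of using four coupled variables instead of three free ones is the identity
\[
\sigma_{3}(x_{1},\dots,x_{n})-c=\Big(\sum_{i=1}^{n-2}t_{i}\Big)\bigl(\sigma_{2}(x_{1},\dots,x_{n})-b\bigr),
\]
valid for this particular substitution; it makes the $\sigma_{3}$-condition a consequence of the $\sigma_{2}$-condition. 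One is then left with the single equation $\sigma_{2}=b$, which in $(P,Q)$ is a smooth conic with an obvious rational point, hence has a rational parametrisation. This is the missing idea: an algebraic dependency among the three constraints created by the shape of the ansatz, not a counting of parameters.
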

\begin{proof}
In order to tackle the system (\ref{threesystem}) let us put $\overline{T}=(t_{1},\ldots,t_{n-1})$, where $t_{1},\ldots, t_{n-1}$ are rational parameters and
\begin{equation*}
a=\sigma_{1}(\overline{T},t_{1}+t_{2}-t_{n-1}),\quad b=\sigma_{2}(\overline{T},t_{1}+t_{2}-t_{n-1}),\quad c=\sigma_{3}(\overline{T},t_{1}+t_{2}-t_{n-1}).
\end{equation*}
We put now
\begin{equation}\label{solex}
x_{1}=P,\;x_{2}=Q,x_{3}=t_{1}+t_{2}-Q,\;x_{4}=t_{1}+t_{2}-P,\;x_{i}=t_{i-2}\quad\mbox{for}\quad i=5,\ldots,n.
\end{equation}
For $x_{i}$ defined in this way let us put $\overline{X}=(x_{5},\ldots,x_{n})$. We are now interested only in the variables $P, Q$. We see that the equality $\sigma_{1}(P,Q,x_{3},x_{4},\overline{X})=a$ holds true. Now, we note the following identities
\begin{equation}\label{sigma2}
\sigma_{2}(P,Q,x_{3},x_{4},\overline{X})-b=t_{n-1}^2+(t_{1}+t_{2})(P+Q-t_{n-1})-t_{1}t_{2}-(P^2+Q^2),
\end{equation}
and
\begin{equation}\label{sigma3}
\sigma_{3}(P,Q,x_{3},x_{4},\overline{X})-c=\Big(\sum_{i=1}^{n-2}t_{i}\Big)(\sigma_{2}(P,Q,x_{3},x_{4},\overline{X})-b).
\end{equation}
We present only the proof of the first identity. Using exactly the same method of expansion of  $\sigma_{2}(P,Q,x_{3},x_{4},\overline{X})$ as on the beginning of the proof of Theorem \ref{thm1} we get
\begin{equation*}
\sigma_{2}(P,Q,x_{3},x_{4},\overline{X})=\sigma_{2}(P,Q,x_{3},x_{4})+(P+Q+x_{3}+x_{4})\sigma_{1}(\overline{X})+\sigma_{2}(\overline{X}).
\end{equation*}
We note now the following identities
\begin{align*}
\sigma_{2}(P,Q,x_{3},x_{4})&=(t_1+t_2)(P+Q)-P^2-Q^2+(t_{1}+t_{2})^2,\\
            P+Q+x_{3}+x_{4}&=2(t_{1}+t_{2}),\\
 \sigma_{2}(\overline{X})-b&=t_{n-1}^2-t_{1}t_{2}-(t_{1}+t_{2})(2\sigma_{1}(\overline{T})-t_{1}-t_{2}-t_{n-1}).
\end{align*}
The last identity follows from the symmetry and expansion of $\sigma_{2}$ with respect to $t_{1}$ and $t_{2}$. Now a simple calculation of the expression on the left in (\ref{sigma2}) gives us the expression on the right. The second identity can be proved analogously. This is a simple but tedious calculation based on the expression for $\sigma_{3}(P,Q,x_{3},x_{4},\overline{X})$ with $P, Q, x_{3}, x_{4}$ written explicitly. We leave the simple details to the reader.

In the light of our computations we see that it is enough to show that the quadric
\begin{equation*}
\cal{Q}:\;t_{n-1}^2+(t_{1}+t_{2})(P+Q-t_{n-1})-t_{1}t_{2}-(P^2+Q^2)=0
\end{equation*}
defined over the field $\Q(t_{1},t_{2},t_{n-1})$ has infinitely many rational points. However, one can check that $\cal{Q}$ contains the point $(P,Q)=(t_{1},t_{n-1})$ and this implies that $\cal{Q}$ can be parameterized by rational functions. The parametrization is given by
\begin{equation*}
P=\frac{t_1 u^2+(t_1+t_2-2t_{n-1})u+t_2}{u^2+1},\quad Q=\frac{(t_{1}+t_{2}-t_{n-1})u^2+(t_2-t_1)u+t_{n-1}}{u^2+1}.
\end{equation*}
We have thus proved that the system (\ref{threesystem}) with $a, b, c$ defined above has infinitely many solutions in rational numbers $x_{1},\ldots, x_{n}$, where $x_{i}$ is given by (\ref{solex}) and $P,Q$ are given above. In fact our method gives a parametric solution of the system under consideration.
\end{proof}

\begin{exam}\label{exam3}
{\rm In order to see an example of of solutions of the system (\ref{threesystem}) we take $n=5$ and $(t_{1},t_{2},t_{3},t_{4})=(1,1,1,2)$. We thus consider the system
\begin{equation*}
\sigma_{1}(x_{1},\ldots, x_{5})=5,\quad \sigma_{2}(x_{1},\ldots, x_{5})=9,\quad \sigma_{3}(x_{1},\ldots, x_{5})=7.
\end{equation*}
Our method give us the following solution of this system
\begin{equation*}
(x_{1},x_{2},x_{3},x_{4},x_{5})=\Big(\frac{(u-1)^2}{u^2+1},\frac{2}{u^2+1},\frac{(u+1)^2}{u^2+1},\frac{2 u^2}{u^2+1},1\Big)
\end{equation*}
in positive rational functions. As a curiosity let us also note that $\sigma_{5}(x_{1},\ldots, x_{5})$ is a square of a rational function

}
\end{exam}

From the result above we easily deduce the following complement of Corollary \ref{cor:quartic}.

\begin{cor}
There exists an infinite set $\cal{G}$ of triples of rational numbers with the property that if $(a, b, c)\in\cal{G}$, then there are infinitely many rational $(n-3)$-tuples $(t_{4},\ldots,t_{n})$ such that all the roots of the polynomial
\begin{equation*}
X^n+aX^{n-1}+bX^{n-2}+cX^{n-3}+\sum_{i=0}^{n-4}t_{i}X^{i}
\end{equation*}
are rational.
\end{cor}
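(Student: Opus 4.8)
The plan is to treat this corollary as the coefficient-wise translation of Theorem \ref{thm5}, in the same spirit in which Corollary \ref{cor:quartic} translated the previous theorem. The bridge is the elementary generating-function identity
\[
\prod_{i=1}^{n}(X+x_{i})=\sum_{k=0}^{n}\sigma_{k}(x_{1},\ldots,x_{n})X^{n-k}=X^{n}+\sigma_{1}X^{n-1}+\cdots+\sigma_{n},
\]
which shows that any rational tuple $(x_{1},\ldots,x_{n})$ yields a monic degree-$n$ polynomial with the rational roots $-x_{1},\ldots,-x_{n}$ and with successive coefficients $\sigma_{1},\ldots,\sigma_{n}$.

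First I would take $\cal{G}$ to be the infinite set of triples $(a,b,c)$ produced by Theorem \ref{thm5}. Fixing any $(a,b,c)\in\cal{G}$, that theorem supplies infinitely many rational solutions $(x_{1},\ldots,x_{n})$ of $\sigma_{1}=a,\;\sigma_{2}=b,\;\sigma_{3}=c$. For each such solution I would form $\prod_{i=1}^{n}(X+x_{i})$; by the identity above its four leading coefficients are $1,a,b,c$, so it has the shape
\[
X^{n}+aX^{n-1}+bX^{n-2}+cX^{n-3}+\sum_{k=4}^{n}\sigma_{k}(x_{1},\ldots,x_{n})X^{n-k},
\]
and splits completely over $\Q$. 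The lower coefficients $(\sigma_{4},\ldots,\sigma_{n})$ are precisely the $n-3$ free parameters of the statement, so each solution furnishes one admissible tuple.

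The only step requiring genuine justification — and hence the one I view as the main obstacle — is that the infinitely many solutions do not collapse to finitely many polynomials, i.e.\ that the free coefficients $(\sigma_{4},\ldots,\sigma_{n})$ assume infinitely many values. Here I would invoke that the entire vector $(\sigma_{1},\ldots,\sigma_{n})$ determines the multiset $\{x_{1},\ldots,x_{n}\}$. In the parametrization underlying Theorem \ref{thm5} the coordinates $x_{1},\ldots,x_{4}$ are non-constant rational functions of the parameter $u$ (as exhibited in Example \ref{exam3}), so the solution multiset genuinely varies with $u$. If the tuple $(\sigma_{4}(u),\ldots,\sigma_{n}(u))$ took only finitely many values as $u$ ranged over $\Q$, then each coordinate $\sigma_{k}(u)$, being a rational function attaining finitely many values at infinitely many points, would be constant; together with the fixed values $\sigma_{1},\sigma_{2},\sigma_{3}$ this would force the multiset $\{x_{1},\ldots,x_{n}\}$ to be independent of $u$, contradicting the nonconstancy of $x_{1},\ldots,x_{4}$. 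Hence infinitely many distinct free-coefficient tuples arise, giving infinitely many rational $(n-3)$-tuples with the required splitting property and proving the corollary.
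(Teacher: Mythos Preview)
Your proposal is correct and follows exactly the route the paper intends: the corollary is stated there with no proof beyond the remark that it is ``easily deduced'' from Theorem~\ref{thm5} via the identity $\prod_{i=1}^{n}(X+x_{i})=\sum_{k}\sigma_{k}X^{n-k}$, which is precisely the bridge you use. Your additional paragraph verifying that the lower coefficients $(\sigma_{4},\ldots,\sigma_{n})$ genuinely vary with the parameter $u$ supplies a detail the paper leaves implicit, and your argument there (a nonconstant rational function cannot assume only finitely many values on $\Q$, and the full symmetric-function vector determines the root multiset) is sound.
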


In the light of the result above one can ask an interesting:

\begin{ques}
Let $n\geq 5$ and $1\leq i_{1}<i_{2}<i_{3}\leq n$ be given. Is it possible to find rational numbers $a, b, c$ such that the system
\begin{equation*}
\sigma_{i_{1}}(x_{1},\ldots, x_{n})=a,\quad \sigma_{i_{2}}(x_{1},\ldots, x_{n})=b,\quad \sigma_{i_{3}}(x_{1},\ldots, x_{n})=c
\end{equation*}
has infinitely many rational solutions? In case $i_{3}=n$ we assume that $c\neq 0$.
\end{ques}

We hope to come back to this and some related questions in a near future.

\bigskip

\noindent {\bf Acknowledgments}
The author express his gratitude to the referee for a careful reading of the manuscript
and many valuable suggestions, which improve the quality of this paper. The research of the author was supported by Polish Government funds for science, grant IP 2011 057671 for the years 2012--2013.

\bigskip

\bigskip
\noindent Jagiellonian University, Faculty of Mathematics and Computer Science, Institute of Mathematics, {\L}ojasiewicza 6, 30 - 348 Krak\'{o}w, Poland;
 email: {\tt maciej.ulas@uj.edu.pl}

 \end{document}